\documentclass{article}
\usepackage{amssymb,amsmath,graphicx,ucs,hyperref}
\usepackage[utf8x]{inputenc}
\usepackage[english]{babel}

\newtheorem{corollary}{Corollary}
\newtheorem{theorem}{Theorem}
\newtheorem{proposition}{Proposition}
\newtheorem{lemma}{Lemma}
\newtheorem{definition}{Definition}

\newenvironment{proof}{\noindent \emph{Proof. }}{\hfill \hbox{\rlap{$\sqcap$}$\sqcup$}\\}

\title{Canonical projection tilings defined by patterns
  \thanks{
  Part of this work has been done in the Departamento de Ingenier\'ia Matem\'atica and Centro de Modelamiento Matem\'atico, CNRS-UMI 2807, Universitad de Chile, Santiago, Chile.}
}

\author{
Nicolas Bédaride
\footnote{Aix Marseille Univ., CNRS, Centrale Marseille, I2M, UMR 7373, 13453 Marseille, France.}
\and
Thomas Fernique
\footnote{Univ. Paris 13, CNRS, Sorbonne Paris Cité, UMR 7030, 93430 Villetaneuse, France.}
}
\date{}

\begin{document}

\maketitle

\begin{abstract}
  We give a necessary and sufficient condition on a $d$-dimensional affine subspace of $\mathbb{R}^n$ to be characterized by a finite set of patterns which are forbidden to appear in its digitization.
  This can also be stated in terms of local rules for canonical projection tilings, or subshift of finite type.
  This provides a link between algebraic properties of affine subspaces and combinatorics of their digitizations.
  The condition relies on the notion of {\em coincidence} and can be effectively checked.
  As a corollary, we get that only algebraic subspaces can be characterized by patterns.
\end{abstract}

\section{Introduction}
\label{sec:introduction}

The cut and project scheme is a popular way to define aperiodic tilings (see, {\em e.g.}, \cite{BG13} and references therein).
A rich subfamily of these tilings is formed by the so-called {\em canonical projection tilings}, which are digitizations of affines $d$-planes of $\mathbb{R}^n$.
It includes, {\em e.g.}, Sturmian words (lines of $\mathbb{R}^2$), billiard words (lines of $\mathbb{R}^3$), Ammann-Beenker tilings ($2$-planes of $\mathbb{R}^4$), Penrose tilings ($2$-planes of $\mathbb{R}^5$) or icosahedral tilings ($3$-planes of $\mathbb{R}^6$).\\

In particular, canonical projection tilings with irrational {\em slopes} (the $d$-plane it digitizes) are widely used in condensed matter theory to model {\em quasicrystals}.
Both are indeed aperiodic but nonetheless ``ordered'' (in a sense that can be slightly different in condensed matter theory or mathematics).
In this context, assuming that the stability of a real material is governed only by finite range energetic interactions, it is important to decide whether such a tiling can be characterized only by its patterns of a given (finite) size - one speaks about {\em local rules}.
This issue has be tackled by numerous authors and several conditions have been obtained \cite{Bur88, Kat88, Lev88, Soc90, LPS92, Kat95, LP95, Le95, Le97, BF13, BF15a, BF15b, BF17}, but not complete characterization yet exists (except if we allow tiles to be {\em decorated}, see \cite{FS18}, but the situation becomes quite different).\\

More precisely, let $G'(n,d)$ denote the $d$-planes of $\mathbb{R}^n$ which are {\em generic}, {\em i.e.}, not included in a strict rational subspace of $\mathbb{R}^n$.
A slope $E\in G'(n,d)$ is said to be {\em characterized by patterns} if there is a finite set of (finite) patterns, called {\em forbidden patterns}, such that any canonical projection tiling with a slope in $G'(n,d)$ which does not contain any of these patterns has a slope parallel to $E$.
It is generally unclear to determine whether a given slope is characterized by patterns.\\

We here provide an equivalent characterization which reduces to decide whether a system of polynomial equations has a finite number of solutions and can thus be effectively checked.
It relies on the geometric notion of {\em coincidence}, first introduced in \cite{BF15a}.
A slope $E\in G'(n,d)$ is said to be characterized by coincidences if it is the only slope in $G'(n,d)$ which admits all these coincidences.
Our main result is that patterns and coincidences are equivalent:

\begin{theorem}\label{th:main}
  A slope in $G'(n,d)$ is characterized by patterns if and only if it is characterized by coincidences.
\end{theorem}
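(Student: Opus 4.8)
The plan is to show the two implications separately, and the key technical device in both directions is the correspondence between occurrences of a pattern in a digitization and a system of (linear) inequalities on the slope together with the window parameters, i.e. the notion of coincidence. First I would set up notation carefully: a canonical projection tiling of slope $E\in G'(n,d)$ is the digitization of $E$ (shifted by some parameter), and each finite pattern $P$ appearing in such a tiling corresponds to the requirement that a certain finite family of lattice points projects into a prescribed region of the internal space $E^\perp$; the set of slopes admitting $P$ (for some shift) is therefore a semialgebraic — in fact locally polyhedral — set described by finitely many equalities/inequalities among the coordinates of a basis of $E$. A coincidence is exactly the degenerate case where such a region collapses, forcing an algebraic equality on $E$; so ``$E$ is characterized by coincidences'' says precisely that the algebraic variety cut out by all coincidences valid for $E$ meets $G'(n,d)$ only at $E$.

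For the direction \emph{coincidences $\Rightarrow$ patterns}: assume $E$ is characterized by its coincidences. Each individual coincidence $c$ valid for $E$ is, by the earlier material, realized as a combinatorial condition on patterns of some finite radius $r_c$ — forbidding a finite set of patterns of radius $r_c$ forces any admissible tiling to exhibit coincidence $c$. The subtlety is that there are infinitely many coincidences in principle; here I would invoke a Noetherianity / compactness argument: the coincidences valid for $E$ cut out a descending chain of algebraic sets in the Grassmannian, which stabilizes after finitely many steps, so finitely many coincidences $c_1,\dots,c_k$ already characterize $E$. Taking $r=\max_i r_{c_i}$ and the union of the corresponding forbidden pattern sets, any admissible tiling avoiding them must satisfy $c_1,\dots,c_k$ and hence have slope parallel to $E$. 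This shows $E$ is characterized by patterns.

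For the converse \emph{patterns $\Rightarrow$ coincidences}: assume a finite forbidden set $F$ of patterns of radius $r$ characterizes $E$, and let $X\subseteq G'(n,d)$ be the set of slopes $E'$ admitting a tiling avoiding $F$; by hypothesis $X=\{E\}$ (up to parallelism). The set of slopes admitting a \emph{fixed} radius-$r$ pattern is a locally closed semialgebraic set, and a slope admits a tiling avoiding $F$ iff its language of radius-$r$ patterns is disjoint from $F$ and is the language of an actual digitization — one needs the combinatorial fact (from the cut-and-project setup) that a consistent choice of local patterns around every lattice point globalizes. The upshot is that a neighborhood of $E$ inside the ``valid'' slopes is a polyhedral piece, and since $E$ is isolated in it, $E$ must lie on the boundary where several of the defining inequalities become equalities — each such equality is, by definition, a coincidence valid for $E$. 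Collecting the coincidences forced at $E$ by the failure of $F$ to appear in nearby generic slopes, one checks these already pin down $E$ among generic slopes: any $E'$ admitting all of them would have the same radius-$r$ patterns at the relevant locations, hence avoid $F$, hence equal $E$.

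The main obstacle I expect is the \emph{finiteness} step in the first direction — controlling that only finitely many coincidences (equivalently, patterns of bounded radius) are needed — and, dually in the second direction, the passage from ``local patterns are individually admissible'' to ``a global tiling of that slope exists,'' since a priori avoiding all forbidden radius-$r$ patterns is a condition on the language, not obviously realized by a single plane's digitization. Both hinge on the semialgebraic/polyhedral description of the relevant slope sets, so I would develop that description first and lean on the Grassmannian being a Noetherian topological space to get the required finiteness.
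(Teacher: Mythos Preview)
Your architecture is right --- split into the two implications, reduce to finitely many coincidences, relate coincidences to degenerations of pattern regions --- but you have misidentified where the work lies, and one of the obstacles you flag is not actually present.

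\medskip

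\textbf{Coincidences $\Rightarrow$ patterns.} The Noetherianity step is fine and the paper uses it implicitly (coincidences give polynomial equations on Grassmann coordinates, the ideal is finitely generated, so finitely many $r$-coincidences for some $r$ already cut out $E$). But the sentence ``each individual coincidence $c$ valid for $E$ is, by the earlier material, realized as a combinatorial condition on patterns of some finite radius $r_c$'' is exactly the heart of the proof, and it is \emph{not} earlier material. What must be shown is: if an $r$-coincidence of $E$ fails in some other generic $F$, then the planar tiling of slope $F$ contains an $r$-pattern absent from the tiling of slope $E$. The paper does this via a case split. When the $n-d+1$ projected faces still pairwise intersect in $W_F$ but are no longer concurrent, they bound a small simplex in $W_F$; this simplex contains the region of some $r$-pattern, and genericity of $F$ forces that pattern to occur. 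When two of the faces no longer even intersect in $W_F$, one needs a separate geometric argument: an endpoint of an edge of one face has left a parallelotope built from the other, hence has left some translate of the window, and a path-folding lemma (any two lattice points projecting into the window are joined by a lattice path whose projection stays in the window) converts this into an $r$-pattern distinguishing $F$ from $E$. Without this two-case analysis you do not have the implication; it is the main technical content, not a citation.

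\medskip

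\textbf{Patterns $\Rightarrow$ coincidences.} Your semialgebraic picture is in the right spirit, but the paper's argument is shorter and avoids the local-to-global worry you raise. The key observation is: a forbidden pattern for $E$ has a region in $W_E$ which is either empty (and stays empty for nearby slopes by continuity) or has empty interior (otherwise genericity of $E$ would put a projected lattice point inside it and the pattern would occur in $E$). An empty-interior polytope in an $(n-d)$-dimensional space has each extremal point lying on at least $n-d+1$ non-parallel bounding hyperplanes --- that is, each extremal point \emph{is} a coincidence of $E$. Now if $F\in G'(n,d)$ shares all coincidences of $E$, the same extremal configurations persist, so every forbidden pattern still has empty-interior region in $W_F$; translating $F$ so that $\pi'\mathbb{Z}^n$ misses this measure-zero set, none of the forbidden patterns appear, forcing $F=E$. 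There is no issue of ``globalizing local patterns into a tiling'': the theorem only compares $E$ with other \emph{planar} tilings (this restriction is stated explicitly in the introduction), so the candidate competitor is already a canonical projection tiling of slope $F$.

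\medskip

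In short: drop the realizability concern, keep the Noetherianity remark, and replace ``by the earlier material'' with the actual argument --- the small-break/big-break dichotomy and the path-folding lemma --- because that is where the proof lives.
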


As far as we know, this is the first necessary and sufficient condition for local rules for planar tilings with slope in $G(n,d)$.
It is moreover easily checked with computer algebra whether a given slope satisfied this condition.
However, we have to acknowledge that it relies on two major assumptions.\\

The first assumption is that only the set of {\em generic} slopes is considered, whereas we would like to have a characterization which considers any possible slope.
Indeed, maybe the patterns which characterize such a slope could allow a planar tiling with a non-generic slope (that is, in $G(n,d)\backslash G'(n,d)$), although we have no such example.
We discuss this more in details in Section~\ref{sec:penrose} on the Penrose tilings.\\

The second assumption is that only {\em planar} tilings are considered, whereas we would like to have a characterization which considers any tiling.
Indeed, maybe a set of patterns which allow only planar tilings with a given slope and no other planar tilings nevertheless allow a non-planar tiling, that is, the digitization of some non-flat surface (the case of a surface which stays at bounded but possibly large distance from a plane is somehow intermediary - this corresponds to the issue of {\em weak local rules} raised in \cite{Lev88}).
In \cite{BF17}, we tackled this issue for $2$-planes in $\mathbb{R}^4$ and obtained a characterization which shows that, among the polynomial equations that the coordinates of the $2$-planes must satisfy, at least $4$ of them must be "sufficiently simple", namely, linear (a corollary is that slopes are always based on at most quadratic irrationalities in this case).
A similar approach would yield a lower bound on the number of linear equations for a $d$-plane in $\mathbb{R}^n$ (namely $\tfrac{dn}{d+1}$): this allows to bound the maximal algebraic degree of irrationalities defining the slopes, but this necessary condition is likely not sufficient.
A sufficient condition is provided in \cite{BF15b}, but it is also likely not tight.
The issue of characterizing {\em planarity} by patterns is thus still open.\\

The rest of the paper is organized as follows.
Section~\ref{sec:tilings} defines canonical projection tilings and the (Grassmann) coordinates of their slopes as well as patterns and tools to study them.
These notions are also useful to study the statistics of patterns in such tilings \cite{HKWS16,HJKW18,Jul10}.\\

Section~\ref{sec:coincidences} introduces coincidences and the associated polynomial equations.
The two next sections prove Theorem~\ref{th:main}: Section~\ref{sec:patterns2coincidences} shows that if a slope in $G'(n,d)$ is characterized by patterns, then it is characterized by coincidences, and Section~\ref{sec:coincidences2patterns} shows the converse.
The three last sections illustrates the theorem (and its limits) on several examples: a "typical" case (Section~\ref{sec:typical}, where everything works fine), the Ammann-Beenker tilings (Section~\ref{sec:ammann_beenker}, which illustrates the case of a slope not characterized by its coincidences) and the Penrose tilings (Section~\ref{sec:penrose}, which illustrate the problem with non-generic slopes).

\section{Canonical projection tilings and patterns}
\label{sec:tilings}

Let $\vec{v}_1,\ldots,\vec{v}_n$ be vectors of $\mathbb{R}^d$, $n>d$, such that any $d$ of them are independent.
For $0<i_1<\ldots< i_d\leq n$, the {\em prototile} $T_{i_1,\ldots,i_d}$ is the non-empty interior parallelotope defined by
$$
T_{i_1,\ldots,i_d}=\{\lambda_{i_1}\vec{v}_{i_1}+\ldots+\lambda_{i_d}\vec{v}_{i_d}~|~0\leq \lambda_{i_1},\ldots,\lambda_{i_d}\leq 1\}.
$$
A {\em tile} is a translated prototile.
An {\em $n\to d$ canonical tiling} is a covering of $\mathbb{R}^d$ by interior-disjoint tiles.
We moreover assume that the tiling is {\em face-to-face}, that is, whenever two tiles intersect, their intersection is a full face (of any dimension) of each tile.\\

The {\em lift} of an $n\to d$ canonical tiling is a $d$-dimensional ``stepped'' manifold of $\mathbb{R}^n$ obtained as follows.
Let $\vec{e}_1,\ldots,\vec{e}_n$ denote the standard basis of $\mathbb{R}^n$.
First, an arbitary vertex of the tiling is mapped onto the origin of $\mathbb{R}^n$.
Then, each translate of $T_{i_1,\ldots,i_d}$ is mapped onto a unit face of $\mathbb{Z}^n$ generated by $\vec{e}_{i_1},\ldots,\vec{e}_{i_d}$, translated such that whenever two tiles share an edge $\vec{v}_i$, their images share an edge $\vec{e}_i$ (this is a consistent definition because any closed path on the tiling is mapped onto a closed path in $\mathbb{R}^n$).
This is nothing but a generalization of the natural interpretation of a rhombille tiling as a surface in the $3$-dimensional space.\\

A $n\to d$ canonical tiling is said to be {\em planar} if there is a $d$-dimensional affine subspace $E$ of $\mathbb{R}^n$ such that the ``tube'' $E+[0,1]^n$ contains a lift of the tiling.
As a discrete object (vertices in $\mathbb{Z}^n$) which stay close to the affine subspace $E$, the tiling can be seen as a {\em digitization} of $E$.
The $d$-plane $E$ is called the {\em slope} of the tiling (it is uniquely defined up to a translation).
Figures \ref{fig:typical_tiling}, \ref{fig:beenker_tiling1}, \ref{fig:beenker_tiling2} and \ref{fig:penrose_tiling} (pages~\pageref{fig:typical_tiling} to \pageref{fig:penrose_tiling}) depict some planar canonical tilings.
The set of $d$-planes of $\mathbb{R}^n$, called {\em Grassmannian}, shall be here denoted by $G(n,d)$.\\

Planar canonical tilings are also called {\em canonical projection tilings}.
Indeed, a planar canonical tiling of slope $E$ can be obtained as follows.
Let $E'$ denote a complementary space of $E$ ({\em e.g.}, the orthogonal space).
Let $\pi$ and $\pi'$ denote the orthogonal projections onto, respectively, $E$ and $E'$.
Let $W_E$ denote the projection of a hypercube $\vec{x}+[0,1]^n$ for some $\vec{x}\in E$.
The polytope $W_E$ is called the {\em window} of the tiling.
By selecting the unit $d$-dim. faces of $\mathbb{Z}^n$ which project under $\pi'$ inside $W_E$ and projecting them under $\pi$ onto $E$ we get the planar canonical tiling of slope $E$ (see, {\em e.g.}, \cite{BG13} for a fully detailed presentation).\\

We shall call {\em generic} a $d$-plane $E$ of $\mathbb{R}^n$ such that the only rational subspace of $\mathbb{R}^n$ which contains $E$ is $\mathbb{R}^n$ itself.
This is equivalent to say that the projection under $\pi'$ of $\mathbb{Z}^n$ is dense in $E^\bot$, or that the projection under $\pi'$ of the vertices of the planar tiling of slope $E$ are dense in the window $W_E$.
We denote by $G'(n,d)$ the set of generic slopes: this is a dense open subset of the Grassmannian $G(n,d)$.\\

{\em Grassmann coordinates} provide a convenient way to describe slopes (see, {\em e.g.}, \cite{HP94} for a detailed account on Grassmann coordinates).
The Grassmann coordinates of a $d$-plane $E$ of $\mathbb{R}^n$ are the $d\times d$ minors of a matrix $M$ whose columns generate $E$.
They are unique up to a common multiplicative constant.
We shall denote by $G_{i_1\cdots i_d}$ the minor obtained from the lines $i_1,\ldots,i_d$.
The Grassmann coordinates of $E$ characterize it and are independent of the matrix $M$ up to a normalization.
One checks that the matrix $M$ defined by $M_{ij}:=G_{1,2,...,i−1,j,i+1,...,d}$ is a possible choice of generators of $E$.\\

Since a $d$-plane of $\mathbb{R}^n$ has $\binom{n}{d}$ Grassmann coordinates but the Grassmannian $G(n,d)$ has dimension $d(n-d)$, there must be relation between Grassmann coordinates.
Indeed, any two coordinates $G_{i_1\cdots i_d}$ and $G_{j_1\cdots j_d}$ satisfy, for any $1\leq k\leq d$, the so-called {\em Plücker relations}
$$
G_{i_1\cdots i_d}G_{j_1\cdots j_d}=\sum_{l=1}^d\underbrace{G_{i_1\cdots i_d}G_{j_1\cdots j_d}}_{\textrm{swap $i_k$ and $j_l$}},
$$
where, by convention, $G_{i_1\cdots i_d}$ is equal to zero if two indices are equal, and has opposite sign if two indices are permuted.
Conversely, any non-zero $\binom{n}{d}$-tuple $(G_{i_1\cdots i_d})$ of real numbers satisfying all these quadratic relations are the Grassmann coordinates of some $d$-plane $E$ of $\mathbb{R}^n$.\\

The Grassmann coordinates of a $d$-plane $E$ can actually be ``seen'' on the planar canonical tiling of slope $E$.
With the normalization $||(G_{i_1\cdots i_d})||_1=1$, $|G_{i_1\cdots i_d}|$ indeed gives the frequency of the tile $T_{i_1,\ldots,i_d}$ in the tiling.
Moreover, the sign of $G_{i_1\cdots i_d}$ is equal to the sign of $\det(\vec{v}_{i_1},\ldots,\vec{v}_{i_d})$, that is, once the $\vec{v}_i$'s which define the tiles are fixed, the sign of a Grassmann coordinate is the same in the slopes of all the planar canonical tilings (indeed, if a non-zero Grassmann coordinate would have a different sign in the slopes of two planar tilings, then a continuous transformation from one to another would go through a slope where this Grassmann coordinate is zero; this would correspond to a tile whose volume is equal to zero and changes its sign, but we consider only tiles with positive volume).
This restricts the set of $d$-planes of $\mathbb{R}^n$ that can be achieved as a slope of a planar tiling.\\

We call {\em pattern} a finite connected subset of the edges of a canonical tiling.
It can be lifted to $\mathbb{R}^n$ as done for canonical tilings.
As stated in the introduction, a slope $E\in G'(n,d)$ is said to be {\em characterized by patterns} if there is a finite set of patterns, called {\em forbidden patterns}, such that any canonical projection tiling with a slope in $G'(n,d)$ which does not contain any of these patterns has a slope parallel to $E$.\\

Among patterns, those which appear in planar tilings can be characterized via the window (Fig.~\ref{fig:pattern_region}):

\begin{proposition}
\label{prop:pattern_region}
  Let $E$ be a $d$-plane of $\mathbb{R}^n$ and $W_E$ be the associated window.
  The {\em region} of a lifted pattern $\widehat{P}$ is the convex polytope defined by
  $$
  R_E(\widehat{P}):=\bigcap\{(W_E-\pi'\vec{x})~|~\vec{x}\in\mathbb{Z}^n,~\vec{x}\in\widehat{P}\}.
  $$
  Then, for any $\vec{y}\in\mathbb{Z}^n$, the pattern $\pi'(\vec{y}+\widehat{P})$ appears in the planar tiling of slope $E$ if and only if $\pi'\vec{y}$ belongs to the region of $\widehat{P}$.
\end{proposition}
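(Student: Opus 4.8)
The plan is to reduce the statement to the elementary description of which lattice points of $\mathbb{Z}^n$ occur in the lift of the tiling, and then to unwind the definition of $R_E(\widehat{P})$. The one input needed is a consequence of the cut-and-project construction recalled above: a unit $d$-face $F$ of $\mathbb{Z}^n$ belongs to the lift of the planar tiling of slope $E$ if and only if $\pi'(F)\subseteq W_E$, and since $\pi'$ is affine on $F$ (so that $\pi'(F)$ is the convex hull of the $\pi'$-images of the $2^d$ vertices of $F$) while $W_E$ is convex, this is in turn equivalent to $\pi'\vec{x}\in W_E$ for every vertex $\vec{x}$ of $F$. It follows that the vertex set of the lift is exactly $\Lambda_E:=\{\vec{x}\in\mathbb{Z}^n~|~\pi'\vec{x}\in W_E\}$ and, more precisely, that the lift consists exactly of the unit faces of $\mathbb{Z}^n$ (of every dimension) all of whose vertices lie in $\Lambda_E$.

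Granting this, I would argue as follows. The translate $\vec{y}+\widehat{P}$ of the lifted pattern is a finite connected set of edges of $\mathbb{Z}^n$, and the corresponding (projected) pattern occurs in the tiling of slope $E$ if and only if each of these edges is an edge of the lift; by the characterization above this holds if and only if every vertex of $\vec{y}+\widehat{P}$ lies in $\Lambda_E$. (For the ``only if'' part one only needs that vertices of the lift lie in $\Lambda_E$, which is the easy half; the ``if'' part uses the full statement that the lift is the complex carried by $\Lambda_E$, together with the connectedness of $\widehat{P}$.) Now the vertices of $\vec{y}+\widehat{P}$ are precisely the points $\vec{y}+\vec{x}$ with $\vec{x}$ a vertex of $\widehat{P}$, that is, $\vec{x}\in\mathbb{Z}^n$ with $\vec{x}\in\widehat{P}$; and $\vec{y}+\vec{x}\in\Lambda_E$ reads $\pi'\vec{y}+\pi'\vec{x}\in W_E$, i.e. $\pi'\vec{y}\in W_E-\pi'\vec{x}$. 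Imposing this simultaneously for all vertices $\vec{x}$ of $\widehat{P}$ is exactly $\pi'\vec{y}\in\bigcap\{W_E-\pi'\vec{x}~|~\vec{x}\in\mathbb{Z}^n,~\vec{x}\in\widehat{P}\}=R_E(\widehat{P})$, which is the claim. Finally, $W_E$ is a convex polytope (a projected hypercube) and $R_E(\widehat{P})$ is a finite intersection of translates of it, hence itself a convex polytope, as asserted.

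The step I expect to be the only genuine obstacle is the ``more precisely'' clause of the first paragraph, namely that a lower-dimensional unit face of $\mathbb{Z}^n$ with all vertices in $\Lambda_E$ is actually realized in the lift — equivalently, that an edge with both endpoints in $\Lambda_E$ can always be completed to a $d$-face all of whose vertices lie in $\Lambda_E$. For generic $E$ no boundary (open versus closed window) subtleties arise, and this completion property is part of the standard theory of canonical projection tilings; I would either invoke it from the background references or prove it directly by a short convexity argument using $W_E=\pi'([0,1]^n)$ and the connectedness of the pattern. Everything else is routine unwinding of the definitions.
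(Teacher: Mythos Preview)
Your proof is correct and follows essentially the same route as the paper's: both reduce to the cut-and-project criterion that an edge of $\mathbb{Z}^n$ belongs to the lift if and only if its endpoints project (under $\pi'$) into $W_E$, then intersect the resulting conditions over all vertices of $\widehat{P}$ to obtain $R_E(\widehat{P})$. The paper treats the single-edge case first and iterates, while you phrase it globally via $\Lambda_E$, but the content is the same.

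One remark: the subtlety you flag in your last paragraph --- that an edge with both endpoints in $\Lambda_E$ is actually realized as an edge of the lift --- is a point the paper does not isolate; it simply writes ``by definition of a canonical projection tiling, the edge \ldots\ appears in the tiling if and only if the edge \ldots\ lies into the tube $E+[0,1]^n$''. So you are being more scrupulous than the original here, not less. The completion property you mention does hold (it is standard for canonical projection tilings and follows from convexity of $W_E=\pi'[0,1]^n$), so there is no genuine gap.
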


\begin{proof}
  Let $\vec{x}\in\mathbb{Z}^n$ and $P$ a pattern which can be lifted on $\widehat{P}$.
  First, assume that $P$ is formed by a unique edge which connects the points $\pi\vec{a}$ and $\pi\vec{b}$.
  By definition of a canonical projection tiling, the edge $\pi\vec{x}+P$, which connects $\pi\vec{x}+\pi\vec{a}$ to $\pi\vec{x}+\pi\vec{b}$, appears in the tiling if and only if the edge connecting $\vec{x}+\vec{a}$ to $\vec{x}+\vec{b}$ lies into the tube $E+[0,1]^n$, that is, if and only if the edge connecting $\pi'\vec{x}+\pi'\vec{a}$ to $\pi'\vec{x}+\pi'\vec{b}$ lies into $W_E$.
  This happens exactly when $\pi'\vec{x}$ belongs to $(W_E-\pi'\vec{a})\cap (W_E-\pi'\vec{b})$.
  This extends to patterns with more edges by iterating the same argument edge by edge.
\end{proof}

\begin{figure}
  \includegraphics[width=\textwidth]{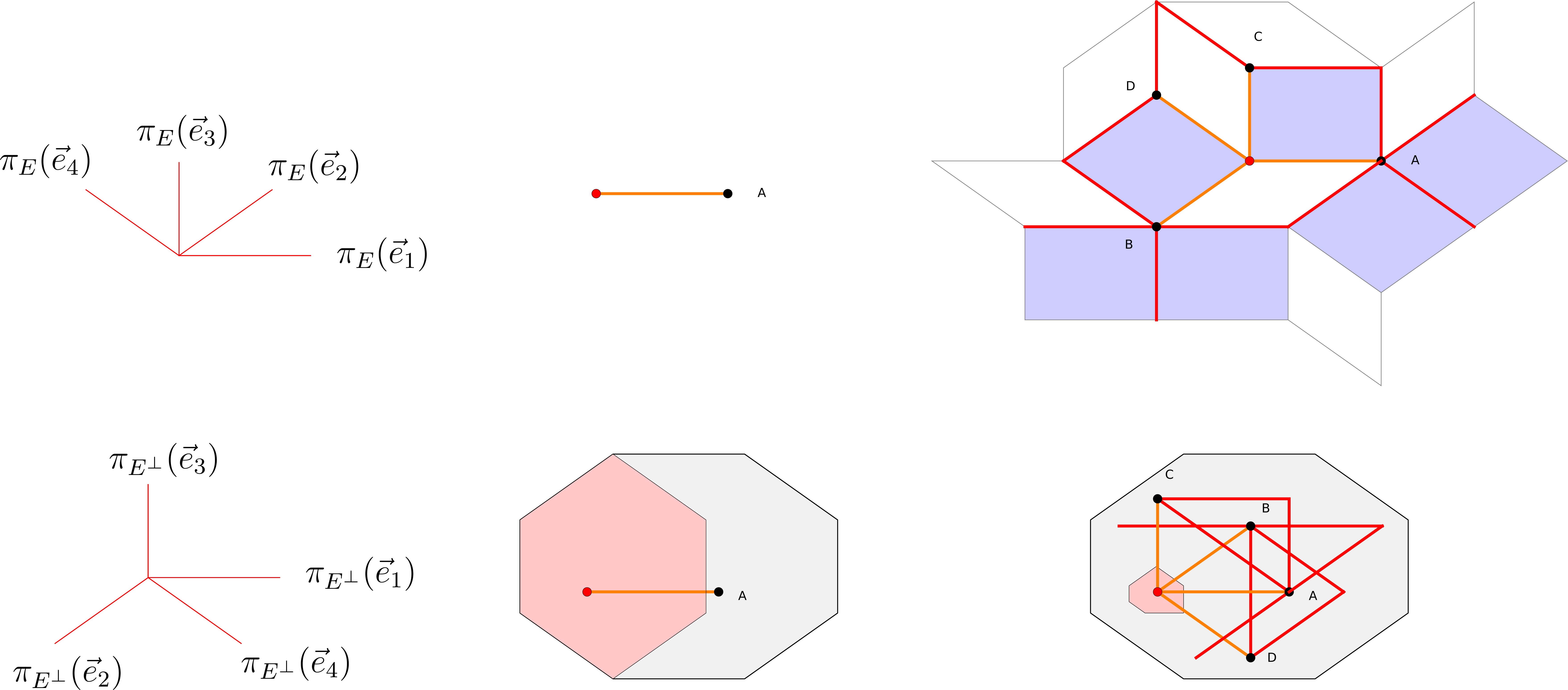}
  \caption{
  Top: projection on a slope $E$ of the basis vectors of $\mathbb{R}^4$, a pattern formed by a unique edge and a more complicated pattern (from left to right).
  Bottom: the same objects projected in the (octagonal) window, where the darkest polygons depict the regions associated with each pattern.
}
  \label{fig:pattern_region}
\end{figure}

In particular, if $E$ is generic, then the density of $\pi'\mathbb{Z}^n$ in $E'$ ensures that any pattern whose region has non-empty interior in $W_E$ appears in the planar tiling with slope $E$.
This shall play a key role in the proof of Th.~\ref{th:main}.

\section{Coincidences}
\label{sec:coincidences}

We shall now introduce the main notion of this paper, namely {\em coincidences}:

\begin{definition}
  \label{def:coincidence}
  A {\em coincidence} of a $d$-plane $E\subset\mathbb{R}^n$ is a set of $n-d+1$ pairwise non-parallel unit $n-d-1$ dimensional faces of $\mathbb{Z}^n$ whose projections under $\pi'$ are concurrent in the window $W_E$.
\end{definition}

When there is no ambiguity, we shall also call coincidence the $n-d+1$ points of $\mathbb{R}^n$ which project onto the same point in $W_E$, or the projections in $W_E$ of the $n-d+1$ unit faces of the coincidence.
As stated in the introduction, a slope $E\in G'(n,d)$ is said to be {\em characterized by coincidences} if it is the only slope in $G'(n,d)$ which admits all these coincidences.
Theorem~\ref{th:main} states the equivalent power of patterns and coincidences to characterize slopes of $G'(n,d)$.
We shall prove it in the following sections.
Here, we first show that coincidences correspond to polynomial equations that can be effectively checked.

\begin{proposition}\label{prop:eq_coincidence1}
  A coincidence of a $d$-plane of $\mathbb{R}^n$ corresponds to a rational polynomial equation on its Grassmann coordinates.
\end{proposition}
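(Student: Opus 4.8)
First I would unwind the definition of a coincidence into linear-algebraic data. A unit $(n-d-1)$-dimensional face of $\mathbb{Z}^n$ is spanned by $n-d-1$ of the standard basis vectors $\vec{e}_j$ and anchored at an integer point; its projection under $\pi'$ onto $E'=E^\bot$ is an $(n-d-1)$-dimensional affine flat inside the $(n-d)$-dimensional space $E'$. Generically two such flats of codimension $1$ in $E'$ meet in an $(n-d-2)$-flat, and $n-d+1$ of them in general position meet in a single point. So "concurrent in $W_E$" means: the $n-d+1$ affine flats $\pi'(\widehat{F}_k)$, $k=1,\dots,n-d+1$, have a common point, equivalently the affine system they define has a solution. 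The plan is to show that this solvability condition is a polynomial equation in the Grassmann coordinates $G_{i_1\cdots i_d}$ with rational (indeed integer) coefficients.

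The key computational step is to express $\pi'$ in terms of Grassmann coordinates. Using the generating matrix $M$ with $M_{ij}=G_{1,\dots,i-1,j,i+1,\dots,d}$ from the excerpt, one has an explicit basis of $E$, hence an explicit formula for the orthogonal projector $\pi'=I-M(M^\top M)^{-1}M^\top$ onto $E'$; clearing the denominator $\det(M^\top M)$, the entries of $\pi'$ become polynomials in the $G$'s. An alternative, cleaner route avoiding the inverse: each face $\widehat{F}_k$ lies in a hyperplane of $\mathbb{R}^n$ once we add the missing basis direction, and $\pi'(\widehat{F}_k)$ is cut out inside $E'$ by a single affine equation $\langle \vec{n}_k, \cdot\rangle = c_k$ where $\vec{n}_k$ is the $\pi'$-image of the basis vector $\vec{e}_{j_k}$ omitted from $F_k$, i.e. $\vec{n}_k = \pi'\vec{e}_{j_k}$, and $c_k$ is $\langle \vec{n}_k, \pi'\vec{a}_k\rangle$ for the anchor $\vec{a}_k$. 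The $n-d+1$ vectors $\vec{n}_k$ all live in the $(n-d)$-dimensional space $E'$, so they are linearly dependent, and concurrency of the flats is exactly the statement that the $(n-d+1)\times(n-d+1)$ matrix obtained by appending the column $(c_k)_k$ to the Gram-type matrix $(\langle \vec{n}_k,\vec{e}_\ell\rangle)$ — more precisely, that the augmented affine system is consistent — has a nontrivial compatible relation; concretely, write the flats as $\langle\vec{n}_k,\vec{y}\rangle=c_k$ for $\vec{y}\in E'$, pick a basis of $E'$, and the condition is the vanishing of a single determinant (the resultant of the over-determined affine system). Since the $\vec{n}_k$ and $c_k$ are, after clearing the common denominator $\det(M^\top M)$ coming from $\pi'$, polynomials in the $G$'s with integer coefficients, that determinant is a rational polynomial in the Grassmann coordinates.

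Then I would tidy up: check that the equation is not identically zero (it genuinely depends on $E$ — otherwise every slope would admit the coincidence, which contradicts its use in characterizing slopes, or one simply exhibits one slope where the flats miss each other), and note that the denominator $\det(M^\top M)$ is nonzero since any $d$ of the $\vec{v}_i$ are independent, so clearing it is legitimate and the resulting polynomial identity faithfully encodes the coincidence. Finally observe that the Plücker relations let one re-express this polynomial purely in the $G$'s without reference to a particular affine chart of the Grassmannian, which is why the statement is intrinsic.

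**Expected main obstacle.** The routine-looking but genuinely fiddly part is bookkeeping the anchors and the normalization: a coincidence is stated for specific translates of unit faces, so one must track the integer offsets $\vec{a}_k$ and confirm the concurrency condition is invariant under the choices made in lifting (translation of the whole configuration), and one must be careful that "concurrent in the window $W_E$" (a bounded region) rather than merely "concurrent in the affine span $E'$" does not add an inequality that would spoil polynomiality — here the genericity/density remark after Proposition~\ref{prop:pattern_region} is what lets us ignore the window constraint, since the relevant configurations can be translated by $\pi'\mathbb{Z}^n$ to land inside $W_E$. Handling that cleanly, and making the determinant argument uniform over which basis directions are omitted, is where the real care goes.
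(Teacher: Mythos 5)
Your overall plan---reduce concurrency to the vanishing of a determinant whose entries are polynomials in the Grassmann coordinates---is the right idea, but you take a genuinely different route from the paper, and your preferred version of it contains a concrete error. The paper never projects onto $E'$ at all: it encodes ``$\vec{x}_0,\ldots,\vec{x}_{n-d}$ project to the same point'' as $\vec{x}_0-\vec{x}_i\in E$, i.e.\ $\vec{x}_0-\vec{x}_i+\sum_k\lambda_{ik}\vec{u}_k=0$, where the basis $\vec{u}_k$ of $E$ has entries that are literally Grassmann coordinates. Counting variables (the $\lambda_{ik}$ plus the non-integer entries of the $\vec{x}_i$) against the $n(n-d)$ equations shows the square system is overdetermined by exactly one, and its determinant is directly a polynomial in the $G$'s---no orthogonal projector, no denominators. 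Your first route via $\pi'=I-M(M^\top M)^{-1}M^\top$ is workable but forces you to clear $\det(M^\top M)$, which is exactly the bookkeeping the paper's formulation avoids (and which would obscure the degree count needed for Proposition~\ref{prop:eq_coincidence2}).

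The concrete error is in your ``cleaner route.'' A unit $(n-d-1)$-face spans the coordinate directions $\vec{e}_i$, $i\in S_k$ with $|S_k|=n-d-1$, so it omits $d+1$ basis directions, not one; there is no single ``missing basis direction'' $\vec{e}_{j_k}$. Moreover, even picking some omitted $j_k$, the vector $\pi'\vec{e}_{j_k}$ is \emph{not} normal in $E'$ to the projected face: one would need $\langle\pi'\vec{e}_{j_k},\pi'\vec{e}_i\rangle=0$ for all $i\in S_k$, but this inner product equals the $(j_k,i)$ entry of the projector $\pi'$, which is generically nonzero. The correct normal spans the line $E'\cap\mathrm{span}(\vec{e}_j:j\notin S_k)$, whose coordinates are expressible in Grassmann coordinates of $E$ (via duality), so the approach can be repaired---but as written the determinant you propose to set to zero is not the concurrency condition. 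I would also note that your worry about the window constraint is well placed but resolved exactly as you say: the polynomial equation encodes only concurrency, and landing the common point inside $W_E$ is handled separately by translation.
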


\begin{proof}
  Let $\mathbb{Q}(G_{i_1,\ldots,i_d})$ denote the Grassmann coordinates rational fraction field.
  Let $\vec{u}_1,\ldots,\vec{u}_d$ be a basis of $E$ whose entries are in $\mathbb{Q}(G_{i_1,\ldots,i_d})$.
  For example, one can take $\vec{u}_{ij}=G_{1,\ldots,i-1,j,i+1,\ldots,d}$.
  Consider a coincidence, {\em i.e.}, points $\vec{x}_0,\ldots, \vec{x}_{n-d}$ of $\mathbb{R}^n$, each with (at most) $n-d-1$ non-integer entries, which project under $\pi'$ onto the same point of the window.
  There are thus, for $i=1,\ldots,n-d$, coefficients $\lambda_{i1},\ldots,\lambda_{id}$ such that
  $$
  \vec{x}_0-\vec{x}_i+\sum_{k=1}^d \lambda_{ik}\vec{u}_k=0.
  $$
  This can be seen as a system of $n(n-d)$ linear equations over $\mathbb{Q}(G_{i_1,\ldots,i_d})$ whose variables are the $d(n-d)$ coefficients $\lambda_{ik}$ and the $(n-d+1)(n-d-1)$ non-integer entries of the $\vec{x}_i$'s.
  The total number of variables is
  $$
  d(n-d)+(n-d+1)(n-d-1)=d(n-d)+(n-d)^2-1=n(n-d)-1,
  $$
  that is, just one less than the number of equations.
  The system is thus overdetermined: the nullity of its (polynomial) determinant yields the equation.
\end{proof}

\noindent Let us be more precise:

\begin{proposition}\label{prop:eq_coincidence2}
The rational polynomial equation corresponding to a coincidence of a $d$-plane of $\mathbb{R}^n$ is homogeneous of degree $n-d$.
\end{proposition}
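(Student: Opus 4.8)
The plan is to extract the coincidence equation in a sufficiently explicit form so that its degree in the Grassmann coordinates can be read off directly, rather than relying on an abstract determinant count. Recall from the proof of Proposition~\ref{prop:eq_coincidence1} that we work over the field $\mathbb{Q}(G_{i_1,\ldots,i_d})$ with the basis $\vec{u}_k$ of $E$ given by $\vec{u}_{ij}=G_{1,\ldots,i-1,j,i+1,\ldots,d}$. The crucial structural fact is that, with this choice, the entries of the matrix $U$ whose columns are $\vec{u}_1,\ldots,\vec{u}_d$ are linear in the Grassmann coordinates: $d$ of the rows form (up to sign) an identity block and the remaining $n-d$ rows have entries that are single Grassmann coordinates. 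I would first set up the $n(n-d)\times(n(n-d)-1)$ linear system of Proposition~\ref{prop:eq_coincidence1} with this explicit $U$, the unknowns being the $\lambda_{ik}$ (for $i=1,\ldots,n-d$, $k=1,\ldots,d$) and the non-integer entries of $\vec{x}_0,\ldots,\vec{x}_{n-d}$, and observe that every coefficient of the system is either an integer (coming from the standard basis directions and integer parts) or a single Grassmann coordinate $G_{i_1\cdots i_d}$ — each equation is affine-linear in the $G$'s, and in fact each equation involves the Grassmann coordinates only through the entries of $U$, i.e.\ through the $\lambda_{ik}$-columns.

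The second step is the degree bookkeeping. The equation is the vanishing of the maximal minor (determinant of the square coefficient matrix after a suitable normalization, or equivalently the compatibility condition of the overdetermined system). The matrix has $n(n-d)$ rows and $n(n-d)-1$ columns; of the columns, exactly $d(n-d)$ correspond to the variables $\lambda_{ik}$ and carry Grassmann coordinates, while the $(n-d+1)(n-d-1)=(n-d)^2-1$ columns corresponding to the non-integer entries of the $\vec{x}_i$ carry only constants ($0,\pm1$). The determinant giving the equation is obtained by expanding along the columns; since only the $\lambda$-columns contribute Grassmann coordinates, and the block structure of $U$ is such that each of the $n-d$ equation-blocks (one per index $i$) contributes its Grassmann coordinates linearly, a Laplace/block expansion shows that the total degree in the $G$'s is exactly the number of independent $\lambda$-blocks contributing a genuine $G$-entry, which is $n-d$. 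Concretely, I expect the cleanest route is: perform row/column operations to eliminate the constant-only columns (the $\vec{x}_i$ entries), reducing to an $(n-d)\times(n-d)$ system in ``block form'' where each block is governed by the single vector-difference equation $\vec{x}_0-\vec{x}_i+U\vec{\lambda}_i=0$ projected appropriately, and each such block contributes degree exactly $1$; the product over $i=1,\ldots,n-d$ then gives degree $n-d$.

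Alternatively — and this may be the slicker argument — one can use a scaling/homogeneity argument directly: if all Grassmann coordinates are multiplied by a common scalar $t$ (which corresponds to rescaling the plane $E$ within its similarity class, equivalently rescaling the window $W_E$ by $t$ along the $E'$-directions), then the coincidence condition ``these $n-d+1$ projected faces are concurrent'' is invariant, but the defining polynomial $F(G_{i_1\cdots i_d})$ must pick up a definite power of $t$. Tracking how the linear system of Proposition~\ref{prop:eq_coincidence1} transforms under $G\mapsto tG$: the matrix $U$ scales as $tU$ on its non-trivial rows, the $\lambda_{ik}$ rescale to keep the equations balanced, and one counts that the compatibility determinant scales by $t^{n-d}$, forcing $F$ to be homogeneous of degree $n-d$. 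I would present the explicit-minor computation as the main argument and mention the homogeneity heuristic as confirmation.

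The main obstacle I anticipate is making the block-expansion / degree count rigorous without drowning in indices: one must argue carefully that no cancellation or accidental drop in degree occurs (that the coefficient of the top-degree monomial is not identically zero) and that the constant-only columns really can be used to eliminate all the $\vec{x}_i$-variables cleanly, leaving precisely $n-d$ blocks each contributing exactly one factor of degree one. Getting the normalization right — the equation is only defined up to a scalar and up to the Plücker ideal — also requires a word of care, since ``degree $n-d$'' should be understood as: there is a representative of the equation that is homogeneous of degree $n-d$ in the $\binom{n}{d}$ Grassmann coordinates. Handling this via the homogeneity argument sidesteps the combinatorial bookkeeping, which is why I would keep it in reserve.
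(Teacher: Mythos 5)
Your main argument follows the same route as the paper's proof: write the coincidence condition as the vanishing of the determinant of the square $n(n-d)\times n(n-d)$ matrix, and compute that determinant by a generalized Laplace expansion separating the integer columns (the constants and the non-integer entries of the $\vec{x}_i$'s) from the $d(n-d)$ columns carrying the block-diagonal copies of $U$; every surviving term is then an integer minor times a product of $n-d$ minors of size $d\times d$, one from each $U$-block (the paper also verifies, as you should, that a row selection not taking exactly $d$ rows in each $U$-block yields a vanishing minor). The gap is in the key step ``each block contributes degree exactly $1$''. From the fact that the entries of $U$ are linear in the Grassmann coordinates you can only conclude that a $d\times d$ minor of $U$ has degree at most $d$, so your bookkeeping as written gives total degree up to $d(n-d)$, not $n-d$; nothing in ``each block is governed by $\vec{x}_0-\vec{x}_i+U\vec{\lambda}_i=0$'' forces degree one. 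What is actually needed, and what the paper invokes, is the definitional fact that the $d\times d$ minors of a generator matrix of $E$ \emph{are} the Grassmann coordinates (up to a common scalar): with your explicit parametrization $U_{ij}=G_{1,\ldots,i-1,j,i+1,\ldots,d}$, the minor on rows $i_1<\cdots<i_d$ equals $G_{1\cdots d}^{\,d-1}\,G_{i_1\cdots i_d}$, so the full determinant is $G_{1\cdots d}^{(d-1)(n-d)}$ times a homogeneous polynomial of degree $n-d$, and it is that last factor which is the coincidence equation. You need to state and use this minor identity; without it the degree count does not close.

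Your backup homogeneity argument does not repair this: under $G\mapsto tG$ the $d(n-d)$ columns carrying $U$ each scale by $t$, so the compatibility determinant scales by $t^{d(n-d)}$, not $t^{n-d}$ (consistent with the factorization above, since $(d-1)(n-d)+(n-d)=d(n-d)$). Scaling therefore establishes only that the raw determinant is homogeneous of degree $d(n-d)$ in the $G$'s and cannot by itself separate the genuine degree-$(n-d)$ equation from the extraneous power of $G_{1\cdots d}$; also note that $G\mapsto tG$ does not change the plane $E$ at all, so the geometric interpretation you give it is not available. Once you insert the minor identity into your block expansion, your argument coincides with the paper's.
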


\begin{proof}
  Let us rewrite the system of the previous proof in terms of matrices:
  $$
  \left(\begin{array}{cccccc}
    \vec{b}_1 & A_1 & U & 0 & \cdots & 0 \\
    \vec{b}_2 & A_2 & 0 & \ddots & \ddots & \vdots\\
    \vdots & \vdots & \vdots & \ddots & \ddots & 0\\
   \vec{b}_{n-d} &A_{n-d} & 0 & \cdots & 0 & U
\end{array}\right)
\left(\begin{array}{c}
  1\\
  r_1\\
  \vdots\\
  r_p\\
  \lambda_{11}\\
  \lambda_{12}\\
  \vdots\\
  \lambda_{n-d,d}
\end{array}\right)
=
0,
$$
where
\begin{itemize}
\item $p:=(n-d-1)(n-d+1)$;
\item $r_1,\ldots,r_p$ denote the non-integer entries of the $\vec{x}_i$'s;
\item $\vec{b}_i\in\mathbb{Z}^n$ tracks the integer entries, {\em i.e.}, the $j$-th entry of $\vec{b}_i$ is the constant term (without $r_i$) of the $j$-th entry of $\vec{x}_0-\vec{x}_i$;
\item $A_i$ is the $n\times p$ matrix tracking the $r_i$'s, {\em i.e.}, $(A_i)_{jk}$ equals $1$ (resp. $-1$) if the $j$-th entry of $\vec{x}_0$ (resp. $\vec{x}_i$) is $r_k$, or $0$ otherwise;
\item $U$ is the $n\times d$ matrix whose $i$-th column is $\vec{u}_i$.
\end{itemize}
Let $M$ denote the above (large) matrix.
This is a square matrix of size $n(n-d)$.
The $\lambda_{ij}$'s are not all equal to zero because the $\vec{x}_i$'s are distincts.
The determinant of $M$ is thus zero: this is the coincidence equation.
Let us compute it by blocks:
$$
\det(M)=\sum_{\varphi\in\Phi}\varepsilon(\varphi)D_\varphi D_{\overline{\varphi}},
$$
where
\begin{itemize}
\item $\Phi$ is the set of increasing maps from $\{1,\ldots,p+1\}$ to $\{1,\ldots,n(n-d)\}$;
\item $\varepsilon(\varphi)$ is the signature of the unique permutation of $\{1,\ldots,n(n-d)\}$ which extends $\varphi$ and is increasing on $\{p+2,\ldots,n(n-d)\}$; 
\item $D_\varphi$ is the determinant of the submatrix of $M$ obtained by keeping the columns $1,\ldots,p+1$ (the $\vec{b}_i$'s and $A_i$'s) and the rows $\varphi(1),\ldots,\varphi(p+1)$;
\item $D_{\overline{\varphi}}$ is the determinant of the submatrix of $M$ obtained by keeping the other columns and rows, that is, the columns $p+2,\ldots,n(n-d)$ (the $U$'s) and the rows whose indices are not in the image of $\varphi$.
\end{itemize}
For any $\varphi\in\Phi$, $D_\varphi$ is an integer as the determinant of a matrix whose entries are entries of the $\vec{b}_i$'s and $A_i$'s, hence integer.
Consider now $D_{\overline{\varphi}}$.
It is obtained by picking $d(n-d)$ rows of the $n(n-d)\times d(n-d)$ matrix with blocks $U$ on its diagonal.
If $\varphi$ does not pick exactly $d$ rows in each block $U$, then there is a block with $k>d$ selected rows.
Each permutation of $d(n-d)$ which appears in the computation of $D_{\overline{\varphi}}$ will then pick at least one coefficient outside this block $U$ (which has only $d$ column).
Since such a coefficient is always zero, $D_{\overline{\varphi}}$ is also zero.
Hence, the only non-zero $D_{\overline{\varphi}}$ are these for which $\varphi$ picks exactly $d$ rows within each block $U$.
They are determinants of a block diagonal matrix, where each of the $n-d$ blocks $d\times d$ yields a Grassmann coordinate.
The coincidence equation $\det(M)=0$ is thus a homogeneous equation of degree $n-d$.
\end{proof}

We illustrate in Section~\ref{sec:typical} how to find these equations for a given slope and deduce the corresponding coincidences.

\section{From patterns to coincidences}
\label{sec:patterns2coincidences}

We shall here prove the first part of Theorem~\ref{th:main}.

\begin{proposition}\label{prop:patterns2coincidences}
  If a slope in $G'(n,d)$ is characterized by patterns, then it is characterized by coincidences.
\end{proposition}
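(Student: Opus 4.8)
The strategy is contrapositive: assume the slope $E\in G'(n,d)$ is \emph{not} characterized by coincidences, and show it is not characterized by patterns. Since $E$ is not characterized by coincidences, there is another slope $E'\in G'(n,d)$, not parallel to $E$, which admits every coincidence that $E$ admits. The goal is to turn this $E'$ into a witness: a planar tiling with slope $E'$ avoiding whatever finite set of forbidden patterns one might propose for $E$. So fix an arbitrary finite set $\mathcal{F}$ of patterns such that the planar tiling of slope $E$ avoids all of them; I must produce a planar tiling with slope in $G'(n,d)$, not parallel to $E$, that also avoids all of $\mathcal{F}$.

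**Key steps.** First I would translate "avoiding a finite pattern set" into the window language of Proposition~\ref{prop:pattern_region}. A pattern $P$ with lift $\widehat P$ occurs in the tiling of slope $F$ exactly when $\pi'_F$ of some lattice point lands in the region $R_F(\widehat P)$; and for a \emph{generic} slope, by the density remark following Proposition~\ref{prop:pattern_region}, $P$ is absent from the tiling of slope $F$ if and only if $R_F(\widehat P)$ has empty interior in $W_F$. So the tiling of $E$ avoiding $\mathcal{F}$ means: for each $P\in\mathcal{F}$, the polytope $R_E(\widehat P)$ is degenerate (lower-dimensional). Now $R_E(\widehat P)$ is an intersection of finitely many translated copies of the window $W_E$; its being lower-dimensional means these translated windows fail to have a common interior point, which geometrically forces some of their boundary hyperplanes to pass through a common point — and the boundary hyperplanes of $W_E$ are exactly the projections of unit $(n-d-1)$-faces of $\mathbb{Z}^n$. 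The main work is to extract from "$R_E(\widehat P)$ has empty interior" a genuine coincidence of $E$ in the sense of Definition~\ref{def:coincidence}: a set of $n-d+1$ pairwise non-parallel unit $(n-d-1)$-faces whose $\pi'$-projections are concurrent. The count $n-d+1$ is exactly the number of hyperplanes through a point needed in the $(n-d)$-dimensional space $E^\perp$ to pin it down (general position would give only one hyperplane through a generic point; concurrency of $n-d+1$ in "general position" is the codimension-one degeneracy condition), which is why degeneracy of the region produces precisely a coincidence.

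**Finishing.** Once every forced degeneracy $R_E(\widehat P)$ empty-interior is attributed to a coincidence $C_P$ of $E$, I invoke the hypothesis: $E'$ admits all coincidences of $E$, in particular each $C_P$. Running the geometry backwards in the window $W_{E'}$: the concurrency witnessed by $C_P$ forces the corresponding intersection of translated windows $R_{E'}(\widehat P)$ to again be degenerate (the same incidence among the same combinatorial faces now holds for $E'$), hence to have empty interior, hence — since $E'$ is generic — the pattern $P$ is absent from the tiling of slope $E'$. As this holds for every $P\in\mathcal{F}$, the planar tiling with slope $E'$ avoids $\mathcal{F}$, and $E'\not\parallel E$ is a slope in $G'(n,d)$, so $\mathcal{F}$ does not characterize $E$. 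Since $\mathcal{F}$ was arbitrary, $E$ is not characterized by patterns.

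**Main obstacle.** The delicate point is the combinatorial geometry of the middle step: precisely characterizing when a finite intersection of translated copies of the polytope $W_E$ has empty interior, and showing this is \emph{equivalent} to (not merely implied by, and not merely implying) the existence of a coincidence among the relevant faces — and doing so in a way that transfers verbatim to $E'$. One has to be careful that "empty interior" of the region can in principle come from the windows missing each other entirely rather than from a boundary-hyperplane concurrence; ruling this out (or reducing to the concurrence case) using that $E$ is generic and that the $\vec b_i$, hence the relative positions of the translated windows, are controlled by the same incidence data that $E'$ shares, is where the argument must be done with care. I also need the bookkeeping that a pattern only constrains finitely many translated windows, so only finitely many faces are involved, keeping the associated coincidence set finite and the transfer to $E'$ exact.
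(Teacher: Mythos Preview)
Your overall strategy is the same as the paper's: contrapositive, take $E'\neq E$ in $G'(n,d)$ sharing all coincidences of $E$, show every forbidden pattern has an empty-interior region in $W_E$, argue this is governed by coincidences, and transfer to $E'$. The paper is considerably terser on your ``main obstacle'': it simply observes that an empty-interior polytope in the $(n{-}d)$-dimensional window must have an extremal point where at least $n{-}d{+}1$ pairwise non-parallel bounding hyperplanes meet, and those hyperplanes are projections of unit $(n{-}d{-}1)$-faces of $\mathbb{Z}^n$ --- i.e., each extremal point of a degenerate region \emph{is} a coincidence. That is the whole argument for the transfer: the same concurrences in $E'$ force the same extremal structure, hence the same empty interior. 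Your worry about the region being genuinely empty (``windows missing each other entirely'') is not separately treated in the paper's coincidence step; it is absorbed into the same statement, since an empty polytope trivially has empty interior and the paper's claim is only that empty interior persists.

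There is one genuine gap in your finishing step. You write that for a generic slope, $P$ is absent if and only if $R_F(\widehat P)$ has empty interior, and then conclude from empty interior in $W_{E'}$ that $P$ is absent from ``the'' tiling of slope $E'$. The forward implication is fine, but the reverse is false as stated: a lattice point of $\pi'\mathbb{Z}^n$ can land exactly on a degenerate region. The paper closes this gap by noting that the union of the (finitely many) degenerate regions has empty interior, hence one may \emph{translate} $E'$ so that no point of $\pi'\mathbb{Z}^n$ falls in that union; that translate yields the planar tiling avoiding $\mathcal{F}$. You need this translation step (and it is exactly why ``characterized by patterns'' speaks of slopes only up to parallelism).
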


\begin{proof}
  Let $E\in G'(n,d)$ characterized by a finite set of forbidden patterns.
  Because of the continuity of $E\to R_E(\widehat{P})$, there is a neighborhood $\mathcal{V}$ of $E$ such that any forbidden pattern which has an empty region for $E$ still has an empty region for planes in $\mathcal{V}$, thus still does not appear in the planar tilings with slope in $\mathcal{V}$.  
  Consider a pattern whose region $R$ in $E$ is not empty.
  This region must have empty interior, otherwise the density of $\pi'\mathbb{Z}^n$ in $E'$, due to the genericity of $E$, ensures that $R$ would contain the projection of an integer point and thus the pattern would appear in $E$.\\
  Now, assume that $E$ is not characterized by coincidences and let us get a contradiction.
  By definition, there is a slope $F\in G'(n,d)$, $F\neq E$, with the same coincidences as $E$.
  Any region of a forbidden pattern, which has empty interior in $W_E$, still has empty interior in $W_F$, because its extremal points are coincidences (indeed, at least $n-d+1$ pairwise non-parallel half-spaces are necessary to define an empty interior polytope in a $n-d$-dim. space).
  Up to a translation of $F$, one can assume that $\pi'\mathbb{Z}^n$ has no point in the union of these region (which has empty interior).
  This ensures that none of the forbidden patterns appears in the planar tiling with slope $F$.
  Since $F\neq E$, this contradicts the hypothesis $E$ is characterized by patterns.
  Thus $E$ must be characterized by coincidences.
\end{proof}

Given a slope $E\in G'(n,d)$, we can compute the coincidence equations (previous section) and check whether $E$ is the only slope in $G'(n,d)$ to satisfy these equations.
If not, then the previous proposition ensures that planar canonical tilings with such a slope are not characterized by patterns.
We shall illustrate this with Ammann-Beenker tilings in Section~\ref{sec:ammann_beenker}.\\

Since the coincidences of a slope correspond to algebraic equations on the Grassmann coordinates of this slope, we get as a corollary the following result, first obtained in \cite{Le97}:

\begin{corollary}
  Any slope in $G'(n,d)$ characterized by patterns is algebraic.
\end{corollary}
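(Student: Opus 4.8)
The plan is to deduce the corollary directly from Proposition~\ref{prop:patterns2coincidences} together with the characterization of coincidences as polynomial equations (Propositions~\ref{prop:eq_coincidence1} and~\ref{prop:eq_coincidence2}). The strategy is: take a slope $E\in G'(n,d)$ characterized by patterns, invoke the previous proposition to conclude that $E$ is characterized by coincidences, translate "characterized by coincidences" into a system of rational polynomial equations on the Grassmann coordinates, and then argue that the unique solution (in $G'(n,d)$, up to normalization) of a finite system of polynomial equations with rational coefficients must have algebraic coordinates.

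More precisely, first I would observe that $E$ is characterized by patterns, so by Proposition~\ref{prop:patterns2coincidences} it is characterized by coincidences: it is the only slope in $G'(n,d)$ admitting all the coincidences it admits. Next, I would note that the set of coincidences of $E$ is itself a well-defined (possibly infinite) set of combinatorial data, and that each coincidence imposes, by Proposition~\ref{prop:eq_coincidence1}, a rational polynomial equation on the Grassmann coordinates $G_{i_1\cdots i_d}$ of $E$. One should be slightly careful here: a priori there could be infinitely many coincidences, but by Hilbert's basis theorem the ideal generated by all these polynomials in $\mathbb{Q}[G_{i_1\cdots i_d}]$ is finitely generated, so the solution set is cut out by finitely many rational equations together with the (also rational) Plücker relations. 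Adding the normalization — say fixing one nonzero coordinate to $1$, which is legitimate since Grassmann coordinates are defined up to a common scalar and at least one is nonzero — we obtain an affine algebraic variety $V$ defined over $\mathbb{Q}$ whose real points include the (normalized) Grassmann coordinates of $E$ and of every slope of $G'(n,d)$ sharing $E$'s coincidences. Since $E$ is characterized by coincidences, $E$ is, up to this normalization, an isolated point of $V$ among the generic slopes; and because $G'(n,d)$ is open in $G(n,d)$, $E$ is in fact an isolated real point of $V$. A zero-dimensional component of a variety defined over $\mathbb{Q}$ has coordinates that are algebraic numbers, hence the Grassmann coordinates of $E$ are algebraic, i.e.\ $E$ is an algebraic slope.

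The main obstacle I anticipate is the passage from "unique among generic slopes satisfying the equations" to "isolated point of an algebraic variety defined over $\mathbb{Q}$, hence algebraic." One must rule out the possibility that $E$ lies on a positive-dimensional component of $V$ all of whose other real points happen to be non-generic: genericity of $E$ and openness of $G'(n,d)$ handle this, since a neighborhood of $E$ in $G(n,d)$ consists of generic slopes, so any nearby point of $V$ would be a distinct generic slope with the same coincidences, contradicting the characterization. Once $E$ is known to be an isolated (i.e.\ zero-dimensional) point of a $\mathbb{Q}$-variety, the conclusion is classical: the coordinate ring localized at that point is a finite-dimensional $\mathbb{Q}$-algebra, so each coordinate satisfies a monic polynomial with rational coefficients. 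Given that the excerpt attributes the result to~\cite{Le97} and treats it as a corollary, I expect the intended proof to be short — essentially the two-line remark that coincidences give rational algebraic equations and a unique solution of such a system is algebraic — with the genericity/openness subtlety handled implicitly or in a single clause.
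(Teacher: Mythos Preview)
Your proposal is correct and follows exactly the approach the paper intends: the paper's entire justification is the single sentence ``Since the coincidences of a slope correspond to algebraic equations on the Grassmann coordinates of this slope, we get as a corollary\ldots'', and your argument (Proposition~\ref{prop:patterns2coincidences} plus Propositions~\ref{prop:eq_coincidence1}--\ref{prop:eq_coincidence2}, then isolatedness via the openness of $G'(n,d)$, then the standard fact that an isolated real point of a $\mathbb{Q}$-variety is algebraic) is precisely the fleshed-out version of that remark. Your identification and handling of the genericity/openness subtlety is correct and is left entirely implicit in the paper.
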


\section{From coincidences to patterns}
\label{sec:coincidences2patterns}

We shall here prove the second part of Theorem~\ref{th:main}.
Given a slope $E\in G'(n,d)$ characterized by coincidences, we shall provide an effective way to find patterns which also characterize $E$.
The point is that when the slopes varies from $E$ to some $F$, a coincidence may {\em break} in $F$, {\em i.e.}, the $n-d+1$ pairwise non-parallel unit $n-d-1$-dim. faces of $\mathbb{Z}^n$ faces whose projections under $\pi'$ were concurrent in the window $W_E$ are no more concurrent.
We have to show that this creates a new region for a pattern which appears in the planar tiling of slope $F$ but did not appear in the planar tiling of slope $E$: this shall yield the pattern to forbid.
We make cases, depending on ``how much'' the coincidence is broken.\\

Let us first refine the notions of coincidences and patterns by introducing an integer parameter $r$.
An {\em $r$-coincidence} is a coincidence such that $r$ bounds the absolute values of the entries of the vertices of the faces involved in the coincidence, and an {\em $r$-pattern} a pattern of a canonical tiling obtained by choosing an arbitrary vertex of the tiling (the {\em center}) and all the vertices within distance $r+1$ from it, then the tiles determined by these vertices (Fig.~\ref{fig:r_patterns_window}, left).

\begin{proposition}
  Let $r\geq 0$ be an integer and $E$ be a $d$-plane of $\mathbb{R}^n$.
  Then, the regions of all the possible $r$-patterns form a partition of the window $W_E$.
\end{proposition}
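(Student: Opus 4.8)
The plan is to show two things: that the regions of $r$-patterns cover the window $W_E$, and that they have pairwise disjoint interiors (so "partition" is understood up to boundaries). The key idea is that an $r$-pattern is nothing but the local configuration seen around a vertex, and by Proposition~\ref{prop:pattern_region} each such configuration is realized precisely when the projected vertex lands in the corresponding region. So the statement is really a restatement of the fact that, in a planar tiling, every vertex determines a unique $r$-pattern around it.

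First I would set up notation: fix $r\geq 0$ and $E$, and consider the (finite) collection $\mathcal{P}_r$ of all $r$-patterns that can possibly occur, meaning all lifted configurations $\widehat{P}$ consisting of a center vertex at the origin together with every lattice point of $\mathbb{Z}^n$ whose $\pi'$-image could conceivably lie in $W_E$ after the relevant translation, within lift-distance $r+1$. For covering: take any point $\vec{w}\in W_E$. Because $W_E$ is the projection of a unit cube, $\vec{w}$ is the $\pi'$-image of some point of the tube $E+[0,1]^n$, and shifting by a suitable element of $\mathbb{Z}^n$ we may assume $\vec{w}=\pi'\vec{y}$ for a vertex $\vec{y}$ of the stepped surface (this uses only that the vertices of the lift are exactly the $\vec{z}\in\mathbb{Z}^n$ with $\pi'\vec{z}\in W_E$). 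The vertices within distance $r+1$ of $\vec{y}$ in the lift, together with the tiles they span, form some $r$-pattern $P$; by Proposition~\ref{prop:pattern_region}, $\pi'\vec{y}\in R_E(\widehat{P})$, so $\vec{w}$ lies in the region of $P$. Hence the regions cover $W_E$.

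For disjointness of interiors: suppose $\vec{w}$ lies in the interior of both $R_E(\widehat{P})$ and $R_E(\widehat{Q})$ for two $r$-patterns $P\neq Q$. Each region is an intersection of translates of $W_E$, hence a convex polytope with nonempty interior; its interior is open, so by density of $\pi'\mathbb{Z}^n$ in $E'$ when $E$ is generic — or, in the non-generic case, simply by perturbing $\vec{w}$ within the common interior to the projection of some vertex $\vec{y}$ (which exists because the vertices project to a set that is dense where the window overlaps a lattice coset, and more simply because we just need one vertex whose projection sits in an explicitly nonempty open set that is a translate of $W_E$-intersections) — there is a vertex $\vec{y}$ with $\pi'\vec{y}$ in both interiors. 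Then, by Proposition~\ref{prop:pattern_region} applied to each of $\widehat{P}$ and $\widehat{Q}$, the local configuration around $\vec{y}$ contains both $P$ and $Q$ as its $r$-pattern; but the $r$-pattern around a fixed vertex is uniquely determined (it is defined as \emph{all} vertices within distance $r+1$ and \emph{all} tiles they determine), so $P=Q$, a contradiction. Therefore the interiors are pairwise disjoint, and together with the covering property the regions of all $r$-patterns tile $W_E$.

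The main obstacle is the disjointness-of-interiors step, and specifically handling it cleanly without assuming genericity of $E$ (the proposition is stated for an arbitrary $d$-plane). The clean fix is to observe that the region of an $r$-pattern, if it meets $W_E$ in a set with nonempty interior, always contains the projection of at least one lattice vertex: indeed $R_E(\widehat{P})\cap \mathrm{int}(W_E)$ nonempty open means it contains $\pi'\vec{y}-\pi'\vec{z}$ for the center $\vec{z}$ of $\widehat{P}$ and some vertex $\vec{y}$, which is exactly the content of Proposition~\ref{prop:pattern_region} read in reverse — so the existence of a realizing vertex is automatic whenever a region has interior, and on that vertex the uniqueness of the surrounding $r$-pattern closes the argument. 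One should also remark that $r$-patterns with empty region simply do not occur in the tiling of slope $E$ and contribute nothing to the partition, so the statement is about the finitely many $r$-patterns that actually appear.
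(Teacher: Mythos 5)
There is a genuine gap in your covering step. You claim that for an arbitrary point $\vec{w}\in W_E$ one may, ``shifting by a suitable element of $\mathbb{Z}^n$,'' assume $\vec{w}=\pi'\vec{y}$ for a vertex $\vec{y}$ of the lift. This is false: the set $\pi'\mathbb{Z}^n\cap W_E$ is countable, hence never all of $W_E$, and it is dense in $W_E$ only when $E$ is generic --- whereas the proposition is stated for an \emph{arbitrary} $d$-plane $E$. The fact that the vertices of the lift are exactly the $\vec{z}\in\mathbb{Z}^n$ with $\pi'\vec{z}\in W_E$ does not let you realize a given window point as such a projection. The same reliance on finding a realizing vertex infects your disjointness step and your proposed ``clean fix'' (reading Proposition~\ref{prop:pattern_region} in reverse gives you that a vertex projecting into a region carries the pattern; it does not produce such a vertex without density). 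Even in the generic case you would still need an extra argument (finitely many closed regions containing a dense set must cover $W_E$) to pass from lattice points to all of $W_E$, and the non-generic case is simply not reached.

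The paper's proof sidesteps vertices entirely and works with an arbitrary point $\vec{x}$ of the window: since $W_E$ is the projection of a unit hypercube, from any $\vec{y}\in W_E$ and any $i$, at least one of $\vec{y}\pm\pi'\vec{e}_i$ lies in $W_E$; hence one can form the union $P'$ of all length-$(r+1)$ paths of edges $\pm\pi'\vec{e}_i$ starting at $\vec{x}$ and staying in the window, lift $P'$ to a connected set of unit edges of $\mathbb{Z}^n$, and project to $E$ to obtain \emph{the} $r$-pattern whose region contains $\vec{x}$. This purely window-geometric construction simultaneously gives covering and uniqueness (taking \emph{all} such paths yields the maximal pattern, which is the paper's convention for points on region boundaries), with no genericity or density assumption. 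If you want to salvage your approach, you must replace ``find a vertex projecting to $\vec{w}$'' by a construction of the pattern directly from $\vec{w}$, which is essentially what the paper does.
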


\begin{proof}
The $r$-pattern whose region contains a given point $\vec{x}$ in the window is indeed determined as follows.
First, consider the union $P'$ of all the paths made of $r+1$ edges $\pm\pi'\vec{e}_i$ which start from this point and stay in the window (there is such paths since from any point $\vec{y}$ of the window and any $i$, either $\vec{y}+\pi'\vec{e}_i$ or $\vec{y}-\pi'\vec{e}_i$ belongs to the window).
Then, lift $P'$ onto a set $\widehat{P}$ of unit edges of $\mathbb{Z}^n$ (that is, $\widehat{P}$ is a connected set of edges which projects onto $P'$).
The projection of $\widehat{P}$ onto the slope $E$ yields an $r$-pattern whose region contains $\vec{x}$.
\end{proof}

\begin{figure}[hbtp]
  \centering
  \includegraphics[width=\textwidth]{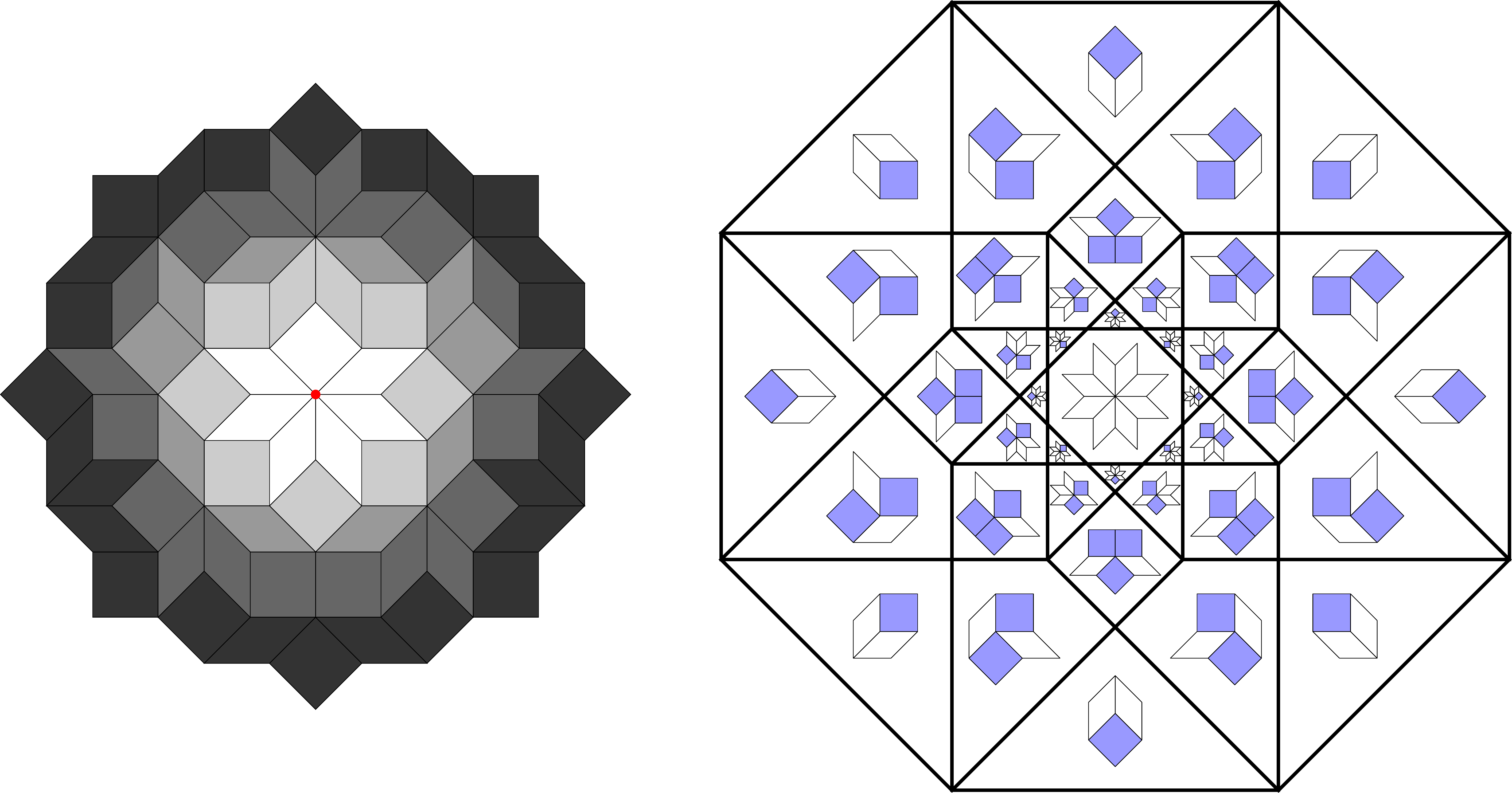}
  \caption{Concentric $r$-patterns with $r$ ranging from $0$ to $4$ (left) and partition of the window by $0$-patterns for a $4\to 2$ planar tiling (namely an Ammann-Beenker tiling, see Sec.~\ref{sec:ammann_beenker}), with each $0$-pattern being depicted inside its region.
  A point on the boundary between two regions belong to the region whose associated pattern has the more edges for inclusion (there is always such an inclusion).
  }
  \label{fig:r_patterns_window}
\end{figure}

Prop.~\ref{prop:pattern_region} moreover shows that the region of an $r$-pattern is the intersection of translations of $W_E$ by the projection under $\pi'$ of integer vectors whose entries have absolute values bounded by $r$.
Fig.~\ref{fig:pattern_region}. and Fig.~\ref{fig:r_patterns_window} (right) illustrates this.\\

\noindent The following lemma addresses ``small breaks'' of coincidences:

\begin{lemma}\label{lem:small_break}
  Let $E$ be a $d$-plane of $\mathbb{R}^n$.
  If an $r$-coincidence breaks in $F\in G'(n,d)$ but its faces still intersect pairwise, then any planar tiling of slope $F$ has an $r$-pattern which does not appear in some planar tiling with a slope parallel to $E$.
\end{lemma}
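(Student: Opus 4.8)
The plan is to exploit the description of $r$-pattern regions given by Proposition~\ref{prop:pattern_region} and the preceding proposition: the regions of all $r$-patterns partition the window $W_E$, and each such region is an intersection of translates $W_E - \pi'\vec{x}$ over integer vectors $\vec{x}$ with $\|\vec{x}\|_\infty \leq r$. First I would set up the combinatorics of the breaking coincidence. An $r$-coincidence is a collection of $n-d+1$ pairwise non-parallel unit $(n-d-1)$-faces of $\mathbb{Z}^n$ whose $\pi'$-images pass through a common point $\vec{p} \in W_E$; since these faces have codimension $1$ in $E'$, each one cuts $E'$ into two halfspaces, and near $\vec{p}$ the configuration of $n-d+1$ hyperplanes in general position in the $(n-d)$-dimensional space $E'$ carves out a bounded (simplex-like) cell exactly when they are concurrent. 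The key point: because $E$ is characterized by coincidences being the extremal structure, the point $\vec{p}$ is a vertex of some $r$-pattern region $R_E(\widehat P)$ — indeed, an empty-interior polytope in an $(n-d)$-dimensional space needs at least $n-d+1$ pairwise non-parallel bounding hyperplanes, so the concurrency of exactly this many faces is what pins $R_E(\widehat P)$ down to a single point (or a lower-dimensional face) rather than letting it have interior.

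Next, I would track what happens as the slope moves from $E$ to $F$. The hypothesis ``the $r$-coincidence breaks in $F$ but its faces still intersect pairwise'' means: in $W_F$ the same $n-d+1$ hyperplanes are no longer concurrent, yet every pair of them still meets inside $W_F$. In dimension $n-d$, $n-d+1$ hyperplanes in general position that pairwise intersect but are not all concurrent bound a small open simplex (the ``broken'' cell has opened up). Concretely, the halfspaces $W_F - \pi'\vec{x}_j$ associated to the faces of the coincidence now have a common intersection with nonempty interior, a small simplex $\Delta_F$ near where $\vec{p}$ used to be. By continuity of $F \mapsto W_F$ and of the hyperplane positions, all the integer vectors $\vec{x}_j$ involved have $\|\vec{x}_j\|_\infty \leq r$ (that is what the $r$ in ``$r$-coincidence'' guarantees), so $\Delta_F$ lies inside the region of some $r$-pattern $\widehat{Q}$ for the slope $F$ — because the $r$-pattern regions partition $W_F$, any point of $\Delta_F$ lies in exactly one such region, and that region is an intersection of translates $W_F - \pi'\vec{x}$ with $\|\vec{x}\|_\infty \le r$ which is contained in $\Delta_F$ once we take the point deep enough inside. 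Since $F$ is generic, $\pi'\mathbb{Z}^n$ is dense in $E'$, so this nonempty-interior region actually contains the projection of an integer point, hence the $r$-pattern $\widehat Q$ genuinely appears in every planar tiling of slope $F$.

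Finally, I would argue that this same $r$-pattern $\widehat Q$ cannot appear in any planar tiling whose slope is parallel to $E$. The region $R_E(\widehat Q)$ — the intersection of the corresponding translates $W_E - \pi'\vec{x}$ — is, by the coincidence being present in $E$, cut down to empty interior: the bounding hyperplanes that opened up $\Delta_F$ are precisely (a superset of) the $n-d+1$ concurrent faces in $E$, so their common intersection in $W_E$ has empty interior. A pattern whose region has empty interior in a generic window does not appear in the corresponding planar tiling (this is the remark after Proposition~\ref{prop:pattern_region}, used also in the proof of Proposition~\ref{prop:patterns2coincidences}); and since the window and the regions depend only on the slope up to translation, the same holds for every slope parallel to $E$. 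This gives the desired $r$-pattern, completing the proof.

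I expect the main obstacle to be the bookkeeping in the middle step: verifying carefully that when the coincidence ``breaks a little'' the opened simplex $\Delta_F$ is nondegenerate (nonempty interior) and that it is captured by a single $r$-pattern region rather than straddling several — in other words, matching the geometric ``general position of $n-d+1$ pairwise-meeting hyperplanes'' picture to the combinatorial partition of the window by $r$-patterns, and checking that no extra bounding hyperplane of that region (coming from an integer vector not in the coincidence) slices $\Delta_F$ further. The genericity of $F$ is what rescues this, but making the ``up to translation of $F$'' and the density argument precise is where the real work lies.
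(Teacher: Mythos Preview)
Your approach matches the paper's proof almost exactly: the broken coincidence opens into a simplex in $W_F$, that simplex contains the region of some $r$-pattern, genericity of $F$ makes the pattern appear in every planar tiling of slope $F$, and in $W_E$ the corresponding region collapses back to the coincidence point. Your worry about $\Delta_F$ being further sliced by hyperplanes coming from other integer vectors is not a real obstacle: the paper simply notes that the simplex \emph{contains} the region of some $r$-pattern (because the $r$-pattern regions partition $W_F$), and any such subregion with nonempty interior will do.

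There is, however, a genuine slip in your final step. You claim that ``a pattern whose region has empty interior in a generic window does not appear in the corresponding planar tiling'', citing the remark after Proposition~\ref{prop:pattern_region}. That remark gives the \emph{opposite} implication: for generic slope, a region with nonempty interior guarantees the pattern \emph{does} appear (by density of $\pi'\mathbb{Z}^n$). It does not say that an empty-interior region forces the pattern to be absent --- a single point can perfectly well lie in $\pi'\mathbb{Z}^n$. Moreover, the lemma does not assume $E$ is generic at all. The paper's remedy is the one matching the exact wording of the conclusion (``does not appear in \emph{some} planar tiling with a slope parallel to $E$''): since $R_E(\widehat Q)$ is a single point, one may shift $E$ so that no integer point projects onto it. Your closing remark about ``translation of $F$'' is thus misplaced; the translation freedom you need is in $E$, not $F$.
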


\begin{proof}
  Consider such a coincidence, {\em i.e.}, $n-d-1$-dim. unit faces of $\mathbb{Z}^n$ whose projections under $\pi'$ are concurrent in $W_E$.
  In $W_F$, the projections of these faces still pairwise intersect and thus define the boundary of a $n-d$-dimensional simplex $R$.
  Since they are translations of the faces of $W_F$ by integer vectors whose entries have absolute values bounded by $r$, this simplex $R$ contains the region of some $r$-pattern.
  This $r$-pattern appears in $F$ because $F$ is generic.\footnote{It could be false for a non-generic $F$ although we have no counter-example.}
  In $W_E$, the region of this $r$-pattern is a point (the coincidence) and, up to an eventual shift of $E$, no point of $\mathbb{Z}^n$ projects onto it, that is, the $r$-pattern does not appear in some planar tiling with a slope parallel to $E$.
\end{proof}

We shall now consider ``big breaks'', {\em i.e.}, when the slope modification is such that the faces of the coincidences do not anymore pairwise intersect.
Intuitively, it should be easier to find a new pattern after such a break than after a small one, since the tiling is much more modified.
However we have to find other faces than those involved in the coincidence, and this makes this case a bit more technical.
We need several lemmas.\\

The first lemma show how to ``fold'' inside the window a path whose endpoints are in the window (Fig.~\ref{fig:path_folding}):

\begin{lemma}\label{lem:fold}
  Let $E$ be a $d$-plane of $\mathbb{R}^n$.
  If two points of $\mathbb{Z}^n$ project inside $W_E$, then they are connected by a path of unit edges of $\mathbb{Z}^n$ which projects into $W_E$.
\end{lemma}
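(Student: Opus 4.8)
The plan is to build the path greedily, moving one coordinate at a time from the first point toward the second, while controlling the $\pi'$-image at every intermediate vertex. Write $\vec{a},\vec{b}\in\mathbb{Z}^n$ for the two points, so that $\pi'\vec{a},\pi'\vec{b}\in W_E$, and let $\vec{c}=\vec{b}-\vec{a}\in\mathbb{Z}^n$. I would first reduce to the case where $\vec{c}$ has nonnegative entries: by replacing $\vec{e}_i$ with $-\vec{e}_i$ for each index $i$ with $c_i<0$ (equivalently, reflecting the relevant coordinate axes), the window $W_E$ is only changed by a translation, so without loss of generality every $c_i\geq 0$. Then a path from $\vec{a}$ to $\vec{b}$ using unit edges can be taken monotone: pick an order in which to increment the coordinates, say first all the steps in direction $\vec{e}_1$, then all in direction $\vec{e}_2$, and so on, passing through the lattice points $\vec{a}=\vec{p}_0,\vec{p}_1,\ldots,\vec{p}_N=\vec{b}$ with $N=\sum_i c_i$ and $\vec{p}_{k+1}-\vec{p}_k$ equal to some $\vec{e}_{i}$.

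The key claim is then that every $\pi'\vec{p}_k$ lies in $W_E$. Recall that $W_E=\pi'([0,1]^n)$ (up to the translation fixing $\vec{a}$ to a point of $E$), and that $W_E$ is a zonotope: $W_E = \sum_{i=1}^n [0,\pi'\vec{e}_i]$ as a Minkowski sum of segments, where I interpret $[0,\pi'\vec{e}_i]$ as the segment from $\vec0$ to $\pi'\vec{e}_i$ after the sign normalization above. Since $\pi'\vec{a}\in W_E$, I can write $\pi'\vec{a}=\sum_i t_i\,\pi'\vec{e}_i$ with $t_i\in[0,1]$. Along the monotone path, after performing $m\leq c_i$ steps in direction $\vec{e}_i$ (and all steps in the earlier directions), the current point is $\pi'\vec{a}+\sum_{j<i}c_j\,\pi'\vec{e}_j+m\,\pi'\vec{e}_i$; its coefficient on $\pi'\vec{e}_j$ relative to the zonotope generators is $t_j$ for $j>i$, is $t_i+m$ for the current index, and is $t_j+c_j$ for $j<i$. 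Each of these coefficients lies in $[0,1]$: for $j>i$ it is just $t_j$; for $j<i$ it equals $t_j+c_j$, which is the $\pi'\vec{e}_j$-coefficient of the final point $\pi'\vec{b}$ and hence lies in $[0,1]$ because $\pi'\vec{b}\in W_E$ (using that the zonotope coefficients of a given point are unique when the $\pi'\vec{e}_i$ are in "general position" — which holds here since any $d$ of the $\vec{v}_i$ are independent, hence any $n-d$ of the $\pi'\vec{e}_i$ are independent); for $j=i$ it is between $t_i$ (achieved at $m=0$, a point on the path from $\vec{a}$) and $t_i+c_i$ (the final coefficient, from $\vec{b}$), so it too is in $[0,1]$. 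Therefore $\pi'\vec{p}_k\in W_E$ for all $k$, and concatenating the unit edges $[\vec{p}_k,\vec{p}_{k+1}]$ gives the required path, since each such edge projects to a segment between two points of $W_E$ and $W_E$ is convex.

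The main obstacle, and the point that needs the most care, is the uniqueness of the zonotope coordinates: the decomposition $\pi'\vec{x}=\sum_i s_i\,\pi'\vec{e}_i$ with $s_i\in[0,1]$ need not be unique for an arbitrary zonotope, and the argument that "the coefficient of $\pi'\vec{e}_j$ at an intermediate point equals $t_j+c_j$" only makes sense if those coordinates are canonically determined. I would resolve this by noting that $E'=E^\perp$ has dimension $n-d$ and any $n-d$ of the vectors $\pi'\vec{e}_i$ span $E'$ and are in general linear position (inherited from the independence hypothesis on the $\vec{v}_i$), so the zonotope $W_E$ is a projection of the cube under a map in general position; one then argues fiberwise — the fiber $\{\vec{t}\in[0,1]^n : \sum t_i\pi'\vec{e}_i=\pi'\vec{x}\}$ is itself a polytope (the intersection of the cube with an affine subspace of dimension $d$), and instead of tracking a single coordinate vector I would track the whole fiber and show it is nonempty at each step by a convexity/continuity argument along the segment in $E'$ from $\pi'\vec{a}$ to $\pi'\vec{b}$. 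Concretely: the straight segment from $\pi'\vec{a}$ to $\pi'\vec{b}$ lies in $W_E$, each of its points has a preimage in $[0,1]^n$, and one can choose these preimages to move monotonically (by compactness and the fact that $\vec{c}\in\mathbb Z^n_{\geq 0}$ is realized by a lattice path staying over this segment); rounding then yields the lattice path $\vec{p}_0,\ldots,\vec{p}_N$ with all projections in $W_E$. This is the delicate step; the rest is bookkeeping.
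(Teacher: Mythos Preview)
Your first approach---fixing a single zonotope decomposition $\pi'\vec{a}=\sum_i t_i\,\pi'\vec{e}_i$ and then running the monotone path ``all $\vec{e}_1$'s first, then all $\vec{e}_2$'s, \ldots''---does not work, and the failure is not merely a bookkeeping issue with non-unique coordinates. Take $n=3$, $d=1$, $E=\mathbb{R}(1,1,1)$, $\vec{a}=(0,0,0)$, $\vec{b}=(2,1,1)$. Both project into $W_E$ (indeed $\pi'\vec{b}=\pi'(1,0,0)$), but the monotone path passes through $(2,0,0)$, whose projection has first coordinate $4/3$, while every point of $W_E$ has coordinates of absolute value at most $2/3$. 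So the path genuinely leaves the window; no choice of the $t_i$ can rescue it. The underlying reason is that when some $c_i\geq 2$ there is \emph{no} $\vec t\in[0,1]^n$ with $\vec t+\vec c\in[0,1]^n$, so your coefficient-tracking argument cannot get off the ground.

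Your proposed fix does not close this gap. First, the claim that zonotope coefficients are unique ``in general position'' is false here: the $\pi'\vec{e}_i$ are $n$ vectors in an $(n{-}d)$-dimensional space, hence always dependent, and the fiber over each point of $W_E$ is a $d$-dimensional polytope. Second, saying you will show the fiber over each intermediate $\pi'\vec p_k$ is nonempty is exactly the statement $\pi'\vec p_k\in W_E$ that you are trying to prove, so the argument is circular. Third, the ``choose monotone preimages along the segment and round'' step conflates two different objects: a continuous path in the cube $[0,1]^n$ (zonotope coordinates) and a lattice path from $\vec a$ to $\vec b$ in $\mathbb{Z}^n$; rounding the former does not produce the latter, and there is no mechanism here that forces rounded points to project into $W_E$.

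What is actually needed---and what the paper does---is to \emph{reorder} the edges adaptively. Start from any path of unit edges from $\vec a$ to $\vec b$; when it first exits $W_E$ through a facet $f$, locate a later edge $\vec b'$ that crosses back to the inner side of the hyperplane supporting $f$ (such an edge exists because the path ends inside $W_E$), and swap it to the current position. This keeps you on the correct side of that hyperplane for one more step; induction on the length of the remaining tail finishes the argument. In the example above this produces $(0,0,0)\to(1,0,0)\to(1,1,0)\to(1,1,1)\to(2,1,1)$, which stays in $W_E$ throughout. The essential idea you are missing is precisely this swap: the order of the steps must depend on the geometry of $W_E$, not be fixed in advance.
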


\begin{proof}
  We shall swap the edges of the path so that it remains inside the window.
  We proceed by induction on the length $k$ of the path.
  This is trivial for $k=0$.
  Consider a path of length $k>0$ and assume that it wanders outside the window (otherwise there is nothing to prove).
  Consider the first edge, say $\vec{a}$ which cross the window's boundary, say on face $f$.
  To get back in the window, the path must contain a further edge, say $\vec{b}$, which goes back to the other side of the hyperplane which contains $f$.  
  Using $\vec{b}$ instead of $\vec{a}$ leads to a point inside the window.
  By swapping the edges $\vec{a}$ and $\vec{b}$ in the path, the endpoints are unmodified, but the path stays inside the window after using edge $\vec{b}$.
  The remaining part (after edge $\vec{b}$) has length less than $k$ and can, by induction, be folded inside the window.
  The whole path now stays inside the window.
\end{proof}

\begin{figure}[hbtp]
  \centering
  \includegraphics[width=\textwidth]{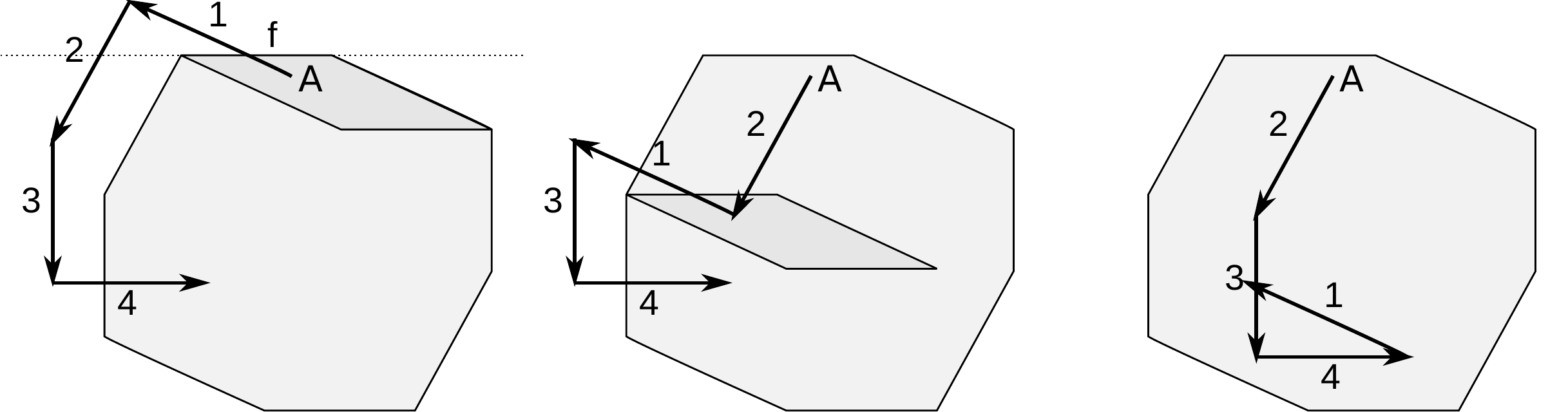}
  \caption{How to permute the edges of a path with endpoints in the window so that it lies completly in the window.}
  \label{fig:path_folding}
\end{figure}

The second lemma relies on the first one to show that $r$-patterns can force an integer point which projects inside the window to still project inside the window:

\begin{lemma}\label{lem:point_in_window}
  Let $E$ and $F$ in $G'(n,d)$, $E\neq F$.
  If $\vec{x}\in\mathbb{Z}^n\cap[-r,r]^n$ projects into the window of $E$ but not into the window of $F$, then any planar tiling with a slope parallel to $F$ contains an $r$-pattern which does not appear in some planar tiling with a slope parallel to $E$.
\end{lemma}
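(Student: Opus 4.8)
The plan is to produce the separating $r$-pattern explicitly, using the origin as a reference vertex common to both slopes, turning the hypothesis ``$\vec x$ projects into $W_E$'' into a concrete lattice path inside $W_E$ via Lemma~\ref{lem:fold}, and then arguing that the resulting local configuration cannot occur in any tiling of slope $E$.

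First I would fix the reference point and the reductions. Both windows contain the origin, since $W_E=\pi'_E[0,1]^n\ni 0$ and likewise $W_F=\pi'_F[0,1]^n\ni 0$; hence $\vec 0$ is a vertex of every planar tiling of slope $E$ and of slope $F$. After perturbing $\vec x$ slightly I may assume the hypothesis is strict: $\pi'_E\vec x$ lies in the interior of $W_E$ and $\pi'_F\vec x$ lies outside the closure of $W_F$. Using density of $\pi'_F\mathbb{Z}^n$ (genericity of $F$), I then pick an integer vector $\vec p$ with $\pi'_F\vec p$ so close to $0$ that $\pi'_F(\vec p+\vec x)$ is still outside $\overline{W_F}$ and $\pi'_F\vec p$ lies in the interior of its cell of the partition of $W_F$ into regions of $r$-patterns; let $Q$ be the $r$-pattern of the planar tiling of slope $F$ centred at $\vec p$. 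Because $\vec x\in[-r,r]^n$, the pattern $Q$ reaches the site at relative position $\vec x$ and records it as empty; and because $\pi'_F\vec p$ is interior to $R_F(Q)$, this region has non-empty interior, so $Q$ occurs in every planar tiling with a slope parallel to $F$.

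Next I would apply Lemma~\ref{lem:fold} to the slope $E$ and the pair $\vec 0,\vec x$: both project inside $W_E$, so they are joined by a lattice path $\gamma=(\vec a_0,\ldots,\vec a_k)$, $\vec a_0=\vec 0$, $\vec a_k=\vec x$, of unit edges whose projection stays in $W_E$. The extra point to check is that $\gamma$ can be kept within the reach of an $r$-pattern centred near $\vec 0$: starting from the ``axis by axis'' path from $\vec 0$ to $\vec x$, whose vertices automatically lie in $[-r,r]^n$ because $\|\vec x\|_\infty\le r$, one performs the edge swaps used in the proof of Lemma~\ref{lem:fold} and controls the excursions they create. That these excursions stay controlled by $r$ (rather than by some much larger bound) is part of what has to be arranged, and it is what ties the parameter $r$ in the hypothesis to the $r$ in the conclusion.

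Finally — and this is where the real work lies — I would show $Q$ occurs in no planar tiling with a slope parallel to $E$, i.e.\ $R_E(Q)$ has empty interior. Suppose $Q$ occurs at a vertex $\vec y$ of the slope-$E$ tiling and put $\vec z:=\pi'_E\vec y$; then $\vec z+\pi'_E\vec v\in W_E$ for every present site $\vec v$ of $Q$, while $\vec z+\pi'_E\vec x\notin W_E$ since $Q$ records the $\vec x$-site as empty. To reach a contradiction I would use convexity of the (zonotopal) window together with the path $\gamma$, aiming to establish the forcing relation $\bigcap_{\vec v\in Q}(W_E-\pi'_E\vec v)\subseteq W_E-\pi'_E\vec x$, so that $\vec z+\pi'_E\vec x$ is forced into $W_E$, contradicting the previous sentence; intuitively, along a suitable translate of $\gamma$ the present sites of $Q$ frame the $\vec x$-site so tightly that, for the convex set $W_E$, they cannot all be filled without that site being filled as well. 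Hence $R_E(Q)$ has empty interior and $Q$ is the desired $r$-pattern. I expect this last step to be the main obstacle: one must choose the pattern (equivalently the centre $\vec p$ and an appropriate sub-path of $\gamma$) so that the forcing relation genuinely holds while $Q$ stays within the $r$-range, which rests on a sharpened form of Lemma~\ref{lem:fold} producing a path that is simultaneously inside $W_E$ and inside $[-r,r]^n$. The remaining points — vertices lying on boundaries of $r$-pattern regions (handled by density of $\pi'\mathbb{Z}^n$ for generic slopes and the convention for boundary regions) and the passage from ``$Q$ occurs as a subpattern'' to the statement about $R_E(Q)$ — are routine.
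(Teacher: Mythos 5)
Your opening moves (origin as common vertex, a monotone path to $\vec x$ with vertices in $[-r,r]^n$, folding it into $W_E$ by Lemma~\ref{lem:fold} while preserving the coordinate bounds) match the paper's proof. But the core of your argument --- the ``forcing relation'' $\bigcap_{\vec v\in Q}(W_E-\pi'\vec v)\subseteq W_E-\pi'\vec x$ for $Q$ the $r$-pattern of the slope-$F$ tiling at a point near the origin --- is exactly what you do not prove, and I do not see why it should hold for that choice of $Q$: the folded path $\gamma$ lies in $W_E$ but \emph{not} in $W_F$ (that is the whole point of the hypothesis), so its sites are not all sites of $Q$, and nothing ties the sites of $Q$ to the position of the $\vec x$-site in $E'$. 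Worse, even granting the forcing relation, your contradiction rests on the premise that an occurrence of $Q$ in a slope-$E$ tiling certifies that the $\vec x$-site is \emph{absent} (``$Q$ records the $\vec x$-site as empty''). In this paper a pattern is a set of edges and, by Proposition~\ref{prop:pattern_region}, it occurs wherever all of its edges are present; occurrence as a sub-pattern says nothing about the absence of further vertices, so ``forced in'' and ``recorded empty'' are not contradictory --- the occurrence would simply sit inside a larger local configuration. The step you dismiss as routine (``the passage from $Q$ occurs as a subpattern to the statement about $R_E(Q)$'') is precisely where the argument breaks.

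The paper avoids both problems by localizing the discrepancy instead of reasoning about a whole $r$-pattern at once. On the folded path, it takes the first vertex $\vec y$ whose projection leaves $W_F$ and the face of $W_F$ crossed by the incoming edge, and attaches to this crossing two translated copies of the window (one by an integer vector sending that face to the opposite parallel face, one by $\vec y-\vec z$ for a vertex $\vec z$ of the crossed face). Their intersection is a nonempty open set in $F'$ but has empty interior in $E'$, because in $E'$ the two copies are separated by the hyperplane of the crossed face. By Proposition~\ref{prop:pattern_region} and monotonicity of regions under pattern inclusion, any $r$-pattern whose region sits inside the fat intersection in $F'$ automatically has a region of empty interior in $E'$; no forcing relation and no absence information are needed. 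To salvage your plan you would have to either prove the forcing relation for a suitably chosen pattern \emph{and} redefine $r$-patterns to carry absence information (which changes the notion of forbidden pattern used throughout the paper), or switch to this two-window separation argument. (Two smaller points: ``perturbing $\vec x$'' is not available since $\vec x$ is an integer vector --- the standard move is to translate the slope, as the paper does --- and the claim that the $r$-pattern centred at $\vec p$ ``reaches'' the site $\vec x$ needs care, since $r$-patterns are defined by graph distance $r+1$ in the tiling while $\vec x$ is only bounded in $\ell^\infty$-norm.)
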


\begin{proof}
  Consider a path made of unit edge of $\mathbb{Z}^n$ from the origin $O$ (which projects in $W_E$) to $\vec{x}$.
  We can assume that each coordinate of the vertices along this path varies in a monotonic way (otherwise it suffices to permute edges and cancel consecutive opposite edges).
  Thus, the vertices on this path are in $[-r,r]^n$.
  We fold this path so that it projects into $W_E$ (Lem.~\ref{lem:fold}).
  Since folding amounts to permute edges, the vertices on this path are still in $[-r,r]^n$ (Fig.~\ref{fig:forcing_point_in_window}, top-left).\\

  In $F'$ (the complementary space of $F$ which contains $W_F$), the projection of this path still connects $O$ to $\vec{x}$, but $\vec{x}$ is now outside $W_F$.
  Let $\vec{y}\in\mathbb{Z}^n$ denote the first vertex of this path which does not project in $W_F$ and $z$ be a vertex of the face of $W_F$ crossed by the edge which arrives in $\vec{y}$ (Fig.~\ref{fig:forcing_point_in_window}, top-right).\\

  Consider the two following translated windows: the first one by the vector $\vec{z}-\vec{z'}$ which maps the face of $W_F$ containing $z$ to the parallel face of $W_F$ ($W_1$ on Fig.~~\ref{fig:forcing_point_in_window}, left-right), and the second one by the vector $\vec{y}-\vec{z}$ ($W_2$ on Fig.~~\ref{fig:forcing_point_in_window}, bottom-right).
  The intersection of both theses windows in $F'$ is non-empty.
  However, the intersection of the two same windows in $E'$ (Fig.~~\ref{fig:forcing_point_in_window}, bottom-left) has empty interior (they are separated by the face which contains $\vec{z}$; the intersection is even empty except if $\vec{y}$ belongs to the boundary of $W_F$).\\

  The vector which maps $W_2$ onto $W_1$ is thus $\vec{z}-\vec{z'}+\vec{y}-\vec{z}=\vec{y}-\vec{z'}$: since $\vec{y}\in [-r,r]^n$ and $\vec{z'}\in[0,1]^n$, it is in $[0,1]^{n+1}$.
  This still holds if we translate both windows by a vector in $\pi'\mathbb{Z}^n$ such that the intersection of $W_1$ and $W_2$ in $F'$ intersect $W_F$.
  By density of $\pi'\mathbb{Z}^n$ in $F'$, this ensures that any planar tiling with a slope parallel to $F$ contains an $r$-pattern.
  Since we have seen that the same region has empty interior in $E'$, this pattern does not appear in some planar tiling with a slope parallel to $E$.
\end{proof}

\begin{figure}
  \includegraphics[width=\textwidth]{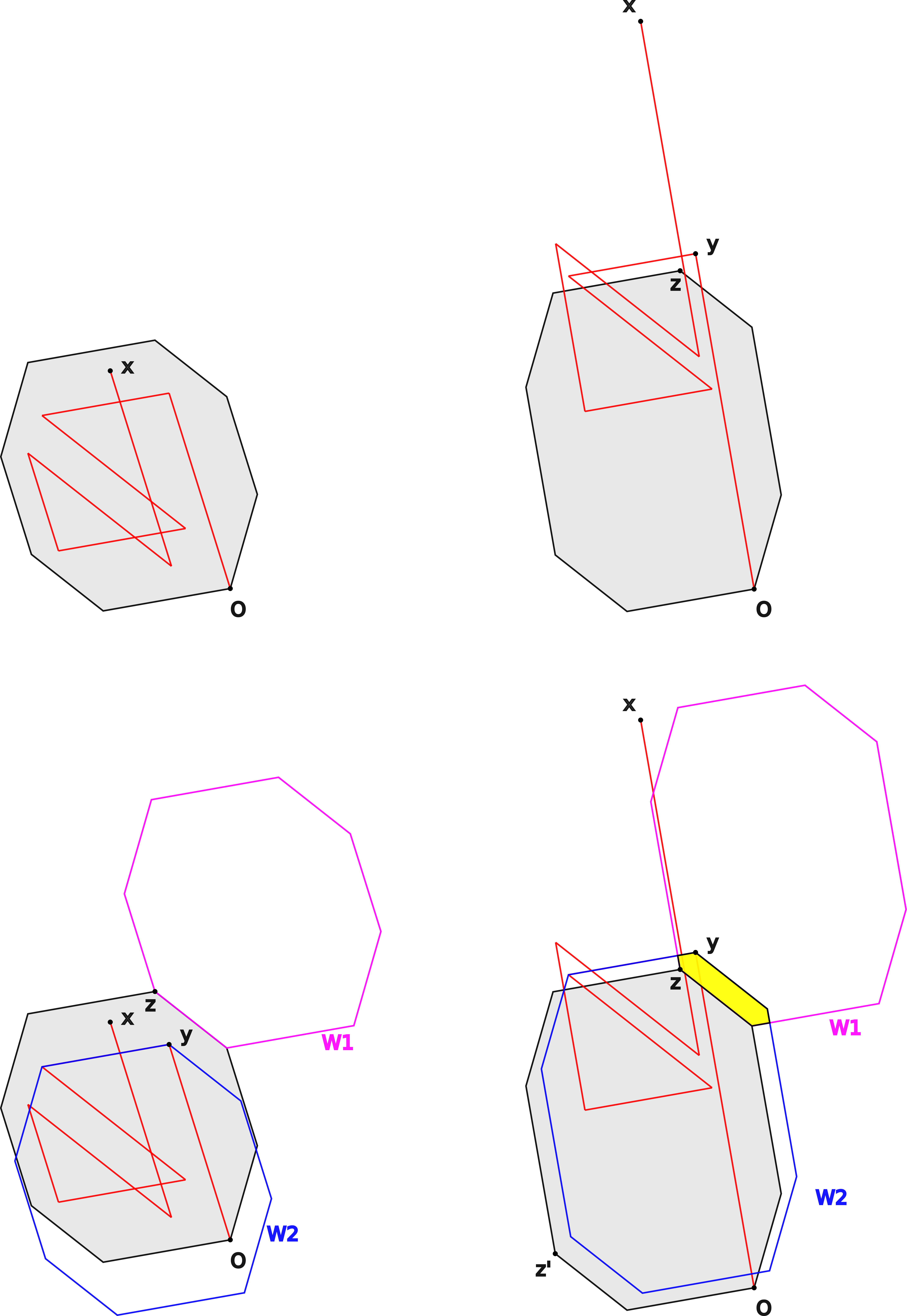}
  \caption{Lemma~\ref{lem:point_in_window}: $r$-patterns can force a point $x$ to project inside the window.}
  \label{fig:forcing_point_in_window}
\end{figure}

The last lemma relies on the second one to show that $r$-patterns can force two unit faces whose projections intersect in the window to still intersect in the window:

\begin{lemma}\label{lem:big_break}
  Let $E$ and $F$ in $G'(n,d)$, $E\neq F$.
  If two faces of an $r$-coincidence of $E$ do not intersect in $W_F$, then any planar tiling of slope $F$ has an $r$-pattern which does not appear in some planar tiling of slope $E$.
\end{lemma}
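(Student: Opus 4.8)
The plan is to reduce Lemma~\ref{lem:big_break} to the already-proved Lemma~\ref{lem:point_in_window}. The setup: we have an $r$-coincidence of $E$ involving $n-d+1$ pairwise non-parallel unit $(n-d-1)$-dimensional faces of $\mathbb{Z}^n$ whose $\pi'$-projections all pass through a common point $\vec{c}$ in $W_E$, and two of these faces, say $f_0$ and $f_1$, have $\pi'$-images that no longer meet in $W_F$. Each face $f_i$ is carried by a vertex $\vec{x}_i\in\mathbb{Z}^n\cap[-r,r]^n$ (together with a choice of $n-d-1$ spanning directions). The key geometric observation is that ``$\pi'f_0$ and $\pi'f_1$ intersect in $W_F$'' is, via Proposition~\ref{prop:pattern_region} applied to the pattern consisting of the edges of $f_0$ and $f_1$ together with a connecting path, equivalent to a statement about whether a certain integer point projects into a translated window. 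So the first step is to translate the failure of intersection in $W_F$ into the hypothesis of Lemma~\ref{lem:point_in_window}.

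Concretely, here is how I would carry it out. First, since $f_0$ and $f_1$ are non-parallel $(n-d-1)$-faces, their affine hulls inside an $(n-d)$-dimensional complement generically meet in a single point; the condition that $\pi'f_0\cap\pi'f_1\neq\varnothing$ inside $W_F$ says that this intersection point lies in $W_F$, equivalently (by the region description in Prop.~\ref{prop:pattern_region}) that some integer vector $\vec{w}$, built from the offset $\vec{x}_0-\vec{x}_1$ and the spanning directions of the two faces, has $\pi'\vec{w}$ lying in the region $R_F$ of the corresponding small pattern. In $E$ this region is a single point (it is exactly the coincidence), so some integer point $\vec{x}\in\mathbb{Z}^n$, bounded in $[-r',r']^n$ for an $r'$ that is still a fixed function of $r$ and the fixed faces, projects into $W_E$; by hypothesis the intersection fails in $W_F$, so the same $\vec{x}$ does not project into $W_F$. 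Thus the hypotheses of Lemma~\ref{lem:point_in_window} are met (perhaps after enlarging $r$ by an absolute constant depending only on $n,d$ and absorbing it into the statement, or by observing the faces of a coincidence already have bounded vertices so the connecting integer vector is bounded), and we conclude that any planar tiling of slope $F$ contains an $r$-pattern absent from some planar tiling of slope $E$.

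The main obstacle I anticipate is the bookkeeping that makes ``the intersection point of $\pi'f_0$ and $\pi'f_1$ lies in the window'' genuinely equivalent to ``a bounded integer point projects into the window'': one must be careful that the affine hulls of the two faces do intersect in a unique point (using pairwise non-parallelism and that $n-d+1\le$ something, i.e. the faces really span the complement), and that the integer vector $\vec{x}$ one extracts has entries bounded by $r$ (or by $2r$, in which case the conclusion should be phrased for $2r$-patterns, or one simply absorbs the constant since the lemma is later used only qualitatively). A secondary subtlety: Lemma~\ref{lem:point_in_window} requires $E\ne F$, which is given, and requires the point to project into $W_E$ but not $W_F$ — I should check the degenerate case where the intersection point lies exactly on $\partial W_F$, which is handled as in the earlier lemmas by an arbitrarily small perturbation / the boundary-inclusion convention already fixed for regions. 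Modulo these routine checks, the lemma is essentially a corollary of Lemma~\ref{lem:point_in_window}, and together with Lemma~\ref{lem:small_break} it covers both ``small'' and ``big'' breaks, completing the ingredients needed for the coincidences-to-patterns direction of Theorem~\ref{th:main}.
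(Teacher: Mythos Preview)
Your overall strategy --- reduce to Lemma~\ref{lem:point_in_window} --- is indeed what the paper does. But the reduction you sketch has a genuine gap at its central step. You claim that ``$\pi'f_0\cap\pi'f_1\neq\varnothing$ in $W_F$'' is, via Proposition~\ref{prop:pattern_region}, equivalent to some bounded integer point $\vec{x}$ projecting into $W_F$, and that in $W_E$ the corresponding region is the single coincidence point, whence such an $\vec{x}$ exists. This is a non sequitur: the coincidence point is a \emph{vertex} of the partition of $W_E$ into pattern regions, not itself the region of any pattern, and there is no reason an integer point of $\pi'\mathbb{Z}^n$ should land on it (generically none does --- that is precisely why the associated pattern does not appear in $E$). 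Proposition~\ref{prop:pattern_region} tells you when a pattern \emph{appears}, not when two projected faces intersect; these are different conditions and your ``equivalently'' is unjustified. Consequently you have not exhibited any concrete $\vec{x}\in\mathbb{Z}^n\cap[-r,r]^n$ lying in $W_E$ but not in $W_F$.

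The paper supplies exactly the missing construction. It does \emph{not} look at the (non-integer) intersection point of the affine hulls. Instead it picks an edge of $f$ that crosses $f'$ in $W_E$ but not in $W_F$, with integer endpoints $\vec{x},\vec{y}$; forms the parallelotope spanned by $f'$ and the edge direction $\vec{x}-\vec{y}$; observes that one endpoint (say $\vec{x}$) lies inside this parallelotope in $W_E$ but outside it in $W_F$; and finally writes the parallelotope as an intersection of integer-translated windows, so that $\vec{x}$ lies in some $W_E-\pi'\vec{v}$ but not in $W_F-\pi'\vec{v}$. The point $\vec{x}+\vec{v}$ is then the bounded integer witness needed for Lemma~\ref{lem:point_in_window}. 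The crucial idea you are missing is to use an integer \emph{vertex} of one face, together with a parallelotope built from the other face, rather than the non-integer coincidence point.
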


\begin{proof}
  Consider two faces $f$ and $f'$ which intersect in $W_E$ but not in $W_F$.
  There is an edge of $f$ which crosses $f'$ in $W_E$ but not $W_F$.
  Let $\vec{x}$ and $\vec{y}$ denote the endpoints of this edge.
  Consider the parallelotope defined by the face $f'$ and the vector $\vec{x}-\vec{y}$.
  In $W_E$, it contains either $\vec{x}$ or $\vec{y}$ - say $\vec{x}$.
  In $W_F$, it contains neither $\vec{x}$ nor $\vec{y}$.
  The point $\vec{x}$ moved outside this parallelotope when changing $E$ to $F$.
  Since this parallelotope is the intersection of translated window (one for each face), there is at least one of these translated window which contains $\vec{x}$ in $E$ but not in $F$.
  Since both $\vec{x}$ and the vertices of the parallelotope have coordinates in $[-r,r]$, Lemma~\ref{lem:point_in_window} ensures that any planar tiling of slope $F$ has an $r$-pattern which does not appear in some planar tiling of slope $E$.
\end{proof}

Lemmas~\ref{lem:small_break} and \ref{lem:big_break} show that the $r$-patterns prevent both small and big breaks of $r$-coincidences.
The second part of Theorem~\ref{th:main} follows:

\begin{proposition}\label{prop:coincidences2patterns}
  If a slope $E$ in $G'(n,d)$ is characterized by $r$-coincidences, then it is characterized by all the $r$-patterns which do not appear in the planar tiling with slope $E$.
\end{proposition}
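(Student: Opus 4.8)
The plan is to show that the finite collection of all $r$-patterns absent from the planar tiling of slope $E$ is a valid set of forbidden patterns, i.e. that any planar tiling with slope $F\in G'(n,d)$ avoiding all of them must have slope parallel to $E$. So I would fix such an $F$ and aim to prove $F$ is parallel to $E$. Since $E$ is characterized by $r$-coincidences, it suffices to show that $F$ admits every $r$-coincidence of $E$: then $F$ and $E$ satisfy the same system of coincidence equations, and by hypothesis this forces $F$ to be (parallel to) $E$, giving a contradiction with $F\neq E$.

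First I would observe that there are only finitely many $r$-patterns of $n\to d$ canonical tilings for fixed $r$ (bounded combinatorial data inside $[-r,r]^n$), so the proposed forbidden set is indeed finite, as required by the definition of ``characterized by patterns''. Next, take an arbitrary $r$-coincidence $C$ of $E$; I want to argue it is also an $r$-coincidence of $F$. The faces of $C$ are, by definition of an $r$-coincidence, unit $(n-d-1)$-faces of $\mathbb{Z}^n$ whose vertices have entries bounded in absolute value by $r$, so both Lemma~\ref{lem:small_break} and Lemma~\ref{lem:big_break} apply to $C$ with this same $r$. I then split into the two cases those lemmas cover. If $C$ breaks in $F$ but all its faces still intersect pairwise in $W_F$ (a ``small break''), Lemma~\ref{lem:small_break} produces an $r$-pattern that appears in the planar tiling of slope $F$ but is absent from some planar tiling with slope parallel to $E$ — hence absent from our list only if it is among the forbidden ones; more precisely this $r$-pattern lies in our forbidden set, so it cannot appear in the tiling of slope $F$, contradiction. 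If instead two faces of $C$ fail to intersect in $W_F$ (a ``big break''), Lemma~\ref{lem:big_break} likewise yields an $r$-pattern present in the slope-$F$ tiling and absent from some slope-$E$ tiling, again a contradiction. The only remaining possibility is that $C$ does not break at all in $F$, i.e. the $n-d+1$ faces of $C$ are still concurrent in $W_F$; since they are pairwise non-parallel (this is preserved because the faces are fixed integer faces and only the window moves), $C$ is an $r$-coincidence of $F$.

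Having shown $F$ admits every $r$-coincidence of $E$, I invoke the hypothesis that $E$ is characterized by $r$-coincidences: $E$ is the unique slope in $G'(n,d)$ admitting all of them, so $F$ must be parallel to $E$, contradicting $F\neq E$. Therefore no such $F$ exists, and the set of $r$-patterns absent from the tiling of slope $E$ characterizes $E$, proving the proposition.

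The subtle points I expect to need care are: checking that ``pairwise non-parallel'' is genuinely preserved when passing from $W_E$ to $W_F$ (it is, since the faces themselves are fixed objects of $\mathbb{Z}^n$ independent of the window, and non-parallelism is a property of the faces, not of the projection), and making sure the trichotomy (no break / small break / big break) is exhaustive — which it is, because ``not a big break'' means all faces pairwise intersect in $W_F$, and then either they are concurrent (no break) or not (small break). A minor bookkeeping point is that Lemmas~\ref{lem:small_break} and~\ref{lem:big_break} as stated conclude ``does not appear in \emph{some} planar tiling with slope parallel to $E$''; since $r$-patterns partition the window and the coincidence is a single point in $W_E$, by shifting $E$ we can assume no point of $\pi'\mathbb{Z}^n$ hits that point, so the relevant $r$-pattern is genuinely one of those absent from the tiling of slope $E$ and thus belongs to our forbidden set. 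None of these is a real obstacle; the heavy lifting has already been done in the three lemmas, and this proposition is their assembly.
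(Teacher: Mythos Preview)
Your proposal is correct and follows exactly the approach the paper takes: the paper's own proof is a one-line remark that Lemmas~\ref{lem:small_break} and~\ref{lem:big_break} together prevent small and big breaks of $r$-coincidences, and you have simply unpacked that remark carefully (including the exhaustive trichotomy no-break/small-break/big-break). Your attention to the ``some planar tiling with slope parallel to $E$'' wording in the lemmas is well placed and handled correctly by the shift argument, which is precisely what the paper does inside the proof of Lemma~\ref{lem:small_break}.
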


\section{A typical example}
\label{sec:typical}

Let us here consider a "typical" generic algebraic $2$-plane of $\mathbb{R}^4$.
We first choose an irreducible polynomial of small degree, say $X^3 + X^2 - X + 1$.
Let $a\simeq -1.834$ be a real root of this polynomial (actually, the unique real root).
We then choose two vectors in $\mathbb{Q}[a]$, say
$$
(a, 2a^2 + 2a + 1, 2a + 2, 2a^2)
\quad\textrm{and}\quad
(2a^2 + 2a, a^2 + a + 1, 2, 2a^2 + 2a + 1).
$$
We check that the plane defined by these two vectors is generic.
Let $E$ denote this plane.
Its Grassmann coordinates are
\begin{eqnarray*}
G_{12}&=&-6a^2 + 3,\\
G_{13}&=&-4a^2 - 6a + 4,\\
G_{14}&=&-4a^2 + 7a - 2,\\
G_{23}&=&2a^2 - 2a + 2,\\
G_{24}&=&4a^2 + 6a - 3,\\
G_{34}&=&10a - 2
\end{eqnarray*}
Fig.~\ref{fig:typical_tiling} depicts a corresponding canonical planar tiling.\\

\begin{figure}[hbtp]
\includegraphics[width=\textwidth]{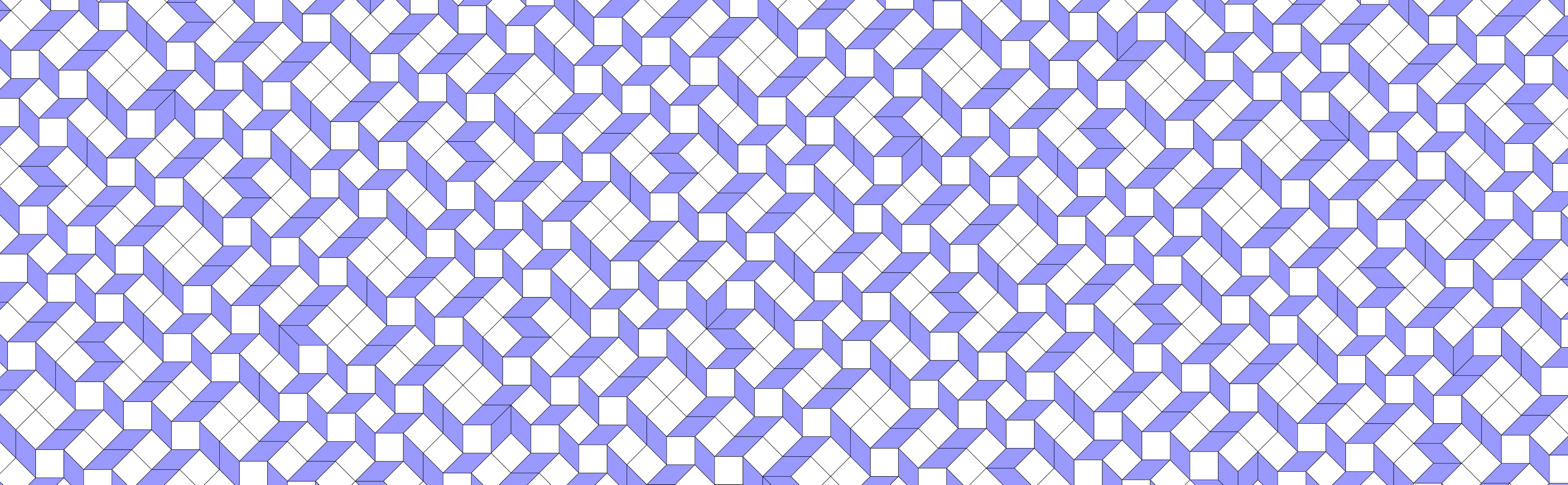}
\caption{A $4\to 2$ canonical planar tiling with a typical generic algebraic slope.}
\label{fig:typical_tiling}
\end{figure}

In the case of a $2$-plane in $\mathbb{R}^4$, coincidences are triple of unit non-parallel segments of $\mathbb{Z}^4$ which are concurrent once projected into the window.
To find them, we thus have to find three points of $\mathbb{R}^4$ which project onto the same point in the window, each of them with three integer entries and one (possibly) real one, with no two points with their real entry at the same position.
Depending on where the real entries are, this yields four {\em coincidence types}\footnote{And no more than $n^{(d+1)(n-d+1)}$ for a $d$-plane of $\mathbb{R}^n$.}.\\

\noindent Let us, for example, consider a coincidence of the type
$$
(a_1,a_2,a_3,r_1),
\qquad
(a_4,a_5,r_2,a_6),
\qquad
(a_7,r_3,a_8,a_9),
$$
where the $a_i$'s denote the integer entries and the $r_j$'s the real ones.
The fact that these four points project in the window on a single point yields a system of equations that can be written ({\em cf} proof of Proposition~\ref{prop:eq_coincidence2})
$$
\left(\begin{array}{r|rrr|r|r}
a_1 -  a_2 & 0 & 0 & 0 & &\\
a_2 -  a_3 & 0 & 0 & 0 & U & 0\\
a_3 & 0 & -1 & 0 & &\\
- a_4 & 1 & 0 & 0 & &\\
\hline
a_1 -  a_{6} & 0 & 0 & 0 & &\\
a_2 & 0 & 0 & -1 & 0 & U\\
a_3 -  a_8 & 0 & 0 & 0 & &\\
- a_9 & 1 & 0 & 0 & &
\end{array}\right)
\left(\begin{array}{c}
  1\\r_1\\r_2\\r_3\\\lambda_{11}\\\lambda_{12}\\\lambda_{21}\\\lambda_{22}
\end{array}\right)
=0,
$$
where the $\lambda_{ij}$'s are real numbers and $U$ is the the matrix whose columns are generators of $E$.
Denote by $M$ the above matrix.
In order to have a coincidence, the determinant of $M$ must be zero.
This determinant is a polynomial in $a$ whose coefficients are integer linear combinations of the $a_i$'s.
The coefficients of $a^i$ must thus be zero for any $i$ less than the algebraic degree of $a$.
Here, this yields (for $i=0,1,2$):
\begin{eqnarray*}
0&=&17 a_1 + 6 a_2 - 30 a_3 + 10 a_4 - 6 a_5 - 27 a_7 + 30 a_8,\\
0&=&56 a_1 - 4 a_2 - 69 a_3 - 26 a_4 + 4 a_5 + 30 a_6 - 30 a_7 + 69 a_8-30 a_9,\\
0&=&32 a_1 - 7 a_2 - 45 a_3 + 4 a_4 + 7 a_5 - 12 a_6 - 36 a_7 + 45 a_8 + 12 a_9.
\end{eqnarray*}
Finding the $a_i$'s thus amounts to compute the kernel of a $3\times 9$ matrix\footnote{And $k\times (d+1)(n-d+1)$ for a $d$-plane of $\mathbb{R}^n$ with entries in a number field of degree $k$.}.
Since coincidences form a $\mathbb{Z}$-module (seen as tuples of $n-d+1$ points in $\mathbb{R}^n$ with pointwise operations), it suffices to consider the coincidences associated with a basis of the above kernel.
Here, the kernel has dimension $6$ and contains, for example, the vector
$$
(a_1,\ldots,a_9)=(3, -3, 3, 3, 3, 2, -5, -3, -3).
$$
To get the equation associated with such a basis vector, it suffices to replace in $M$ the $a_i$'s by their values and to express $U$ with the Grassmann coordinates of $E$: the nullity of $\det(M)$ yields the equation.
The above vector, for example, yields the equation
$$
0=5 G_{12} G_{13} - 6 G_{12} G_{14} - 6 G_{13} G_{14} + 8 G_{12} G_{34}.
$$
We can also find in the kernel of $M$ the vector $(1,r_1,r_2,r_3,\lambda_{11},\lambda_{12},\lambda_{21},\lambda_{22})$ to compute explicitly the $r_i$'s:
\begin{eqnarray*}
r_1&=&-4a^2 - 2a + 2\simeq -7.853,\\
r_2&=&-\tfrac{64}{17}a^2 - \tfrac{52}{17}a + \tfrac{163}{17} \simeq 2.478,\\
r_3&=&-a^2 - 5a \simeq -0.186.
\end{eqnarray*}
The coincidence is thus completly determined (up to translation):
$$
(3,~-3,~3,~-7.853),
\qquad
(3,~3,~2.478,~2),
\qquad
(-5,~-0.186,~-3,~-3).
$$
Its entries are less than $r=8$ in modulus.
We improve to $r=5$ by translating it by $(0,0,0,3)$.
Section~\ref{sec:coincidences2patterns} ensures that this coincidence is enforced by $5$-patterns.\\

Many of the coincidences actually yield the trivial equation $0=0$: these are the ``degenerated'' coincidences where the $n-d+1$ faces are already concurrent in $\mathbb{R}^n$ (thus in a point of $\mathbb{Z}^n$).\\

Proceeding similarly for each of the four possible types of coincidence yields all the coincidences and the corresponding equations.
We get the equations:
\begin{eqnarray*}
0&=& 5 G_{12} G_{13} - 6 G_{12} G_{14} - 6 G_{13} G_{14} + 8 G_{12} G_{34},\\
0&=& 17 G_{12} G_{13} + 28 G_{12} G_{14} - 36 G_{13} G_{14} + 51 G_{13} G_{24} - 42 G_{12} G_{34},\\
0&=& 7 G_{12} G_{23} - 6 G_{14} G_{23} + 4 G_{12} G_{24} - 9 G_{23} G_{24} - 2 G_{12} G_{34},\\
0&=& 14 G_{12} G_{23} - 9 G_{14} G_{23} - 2 G_{12} G_{24} + 24 G_{23} G_{24},\\
0&=& 49 G_{13} G_{23} + 2 G_{14} G_{23} + 12 G_{13} G_{24} - 10 G_{13} G_{34} + 2 G_{23} G_{34},\\
0&=& 38 G_{13} G_{23} - 72 G_{14} G_{23} - 8 G_{13} G_{24} +  G_{13} G_{34} + 103 G_{23} G_{34},\\
0&=& 13 G_{14} G_{23} - 6 G_{13} G_{24} + 2 G_{14} G_{24} + 8 G_{14} G_{34} - 7 G_{24} G_{34},\\
0&=& 38 G_{14} G_{23} + 17 G_{13} G_{24} - 166 G_{14} G_{24} + 26 G_{14} G_{34} + 74 G_{24} G_{34}.
\end{eqnarray*}
We also have to add to these equations the Plücker relations.
There is only one relation for $2$-planes in $\mathbb{R}^4$:
$$
0=G_{12}G_{34}-G_{13}G_{24}+G_{14}G_{23}.
$$
Last, we have to normalize Grassmann coordinates, {\em e.g.}, by setting $G_{12}=1$.
One checks (using, {\em e.g.}, \cite{sagemath}) that the obtained system of polynomial equations is $0$-dimensional.
In other words, $E$ is characterized by coincidences, and Prop.~\ref{prop:coincidences2patterns} ensures that the canonical planar tilings with slope $E$ are characterized by patterns.
More precisely, computing the first four coincidences show that the corresponding equations suffice to characterize $E$.
By suitably translating these coincidences, there entries have modulus at most $r=29$, so that $29$-patterns characterize the canonical planar tilings with slope $E$.\\

Remark that the equations and coincidences we find this way depend on the basis of the kernel of $M$ we use.
In particular, it seems worth finding a basis with short vectors to get coincidences with small entries, hence characterize tilings with small patterns.
The above equations and coincidences have been obtained from the Block-Korkine-Zolotarev reduction of the kernel basis of $M$.\\

Let us mention that the main result of \cite{BF17} ensures that the canonical planar tilings with slope $E$ are not characterized by patterns {\em among all the canonical tilings}, that is, there are {\em non-planar} canonical tilings with the same $29$-patterns.
Theorem~\ref{th:main} just ensures characterization by patterns among {\em planar} canonical tiling.

\section{Ammann-Beenker tilings}
\label{sec:ammann_beenker}

The Ammann-Beenker tilings first appeared in \cite{GS86} (Chap. 10).
Defined as the fixed-points of some substitution \cite{AGS92}, they have also been shown in \cite{Bee82} to be the canonical planar tilings whose slope $E$ is the $2$-plane of $\mathbb{R}^4$ generated by
$$
\left(\cos\tfrac{k\pi}{4}\right)_{0\leq k<4}
\qquad\textrm{and}\qquad
\left(\sin\tfrac{k\pi}{4}\right)_{0\leq k<4}.
$$
Alternative generators are
$$
(-1,0,1,a)
\qquad\textrm{and}\qquad
(0,1,a,1),
$$
where $a=\sqrt{2}$ is the positive root of $X^2-2$.\\

\begin{figure}[hbtp]
\includegraphics[width=\textwidth]{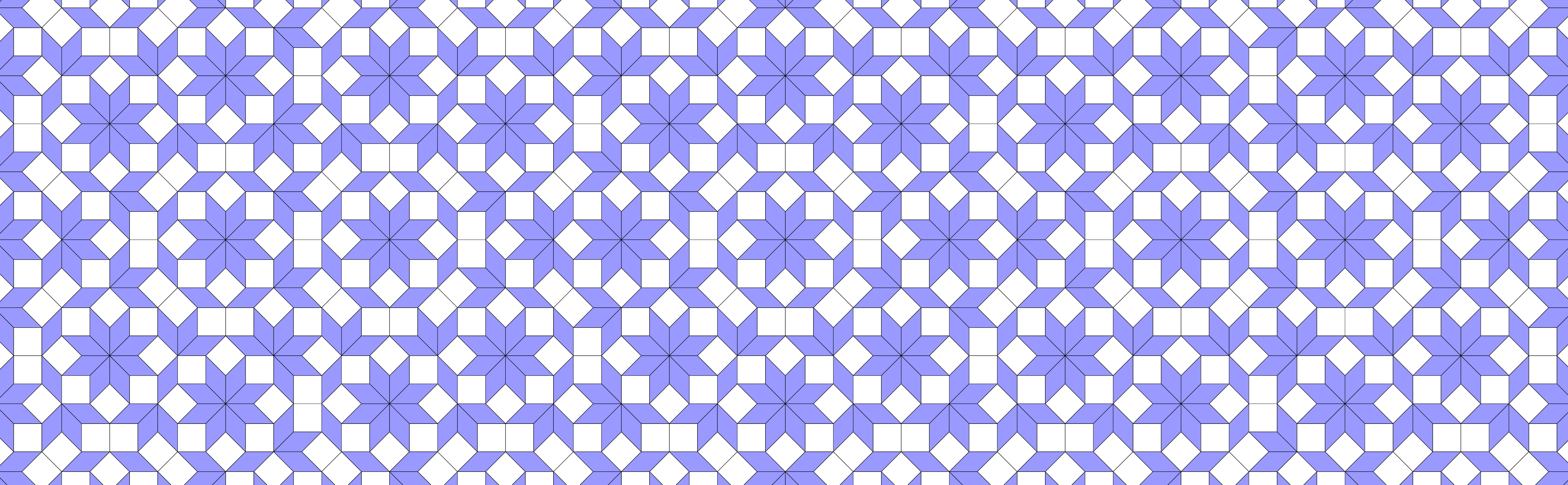}
\caption{An Ammann-Beenker tiling.}
\label{fig:beenker_tiling1}
\end{figure}

The smallest rational space containing $E$ is $\mathbb{R}^4$: it is thus generic.
The Grassmann coordinates of $E$ are
$$
G_{12}=G_{14}=G_{23}=G_{34}=1\\
\qquad\textrm{and}\qquad
G_{13}=G_{24}=a.
$$
An exhaustive search for coincidences (as in the previous example) yields the system:
$$
G_{12}=G_{14}=G_{23}=G_{34}.
\qquad\textrm{and}\qquad
G_{13}G_{24}=G_{12}^2.
$$
With $G_{12}=1$ this becomes
$$
G_{12}=G_{14}=G_{23}=G_{34}=1\\
\qquad\textrm{and}\qquad
G_{13}G_{24}=2.
$$
The Plücker relation also becomes $G_{13}G_{24}=2$.
There is thus a continuum of slopes with the same coincidences as the Ammann-Beenker tilings.
Prop.~\ref{prop:patterns2coincidences} ensures that Ammann-Beenker tilings are not characterized by patterns.
More precisely, for any $r$, we get a tiling with the same $r$-patterns by choosing $G_{13}$ close enough to $a$.
Fig.~\ref{fig:beenker_tiling2} illustrates the case $G_{13}=\tfrac{3}{2}$.\\

\begin{figure}[hbtp]
\includegraphics[width=\textwidth]{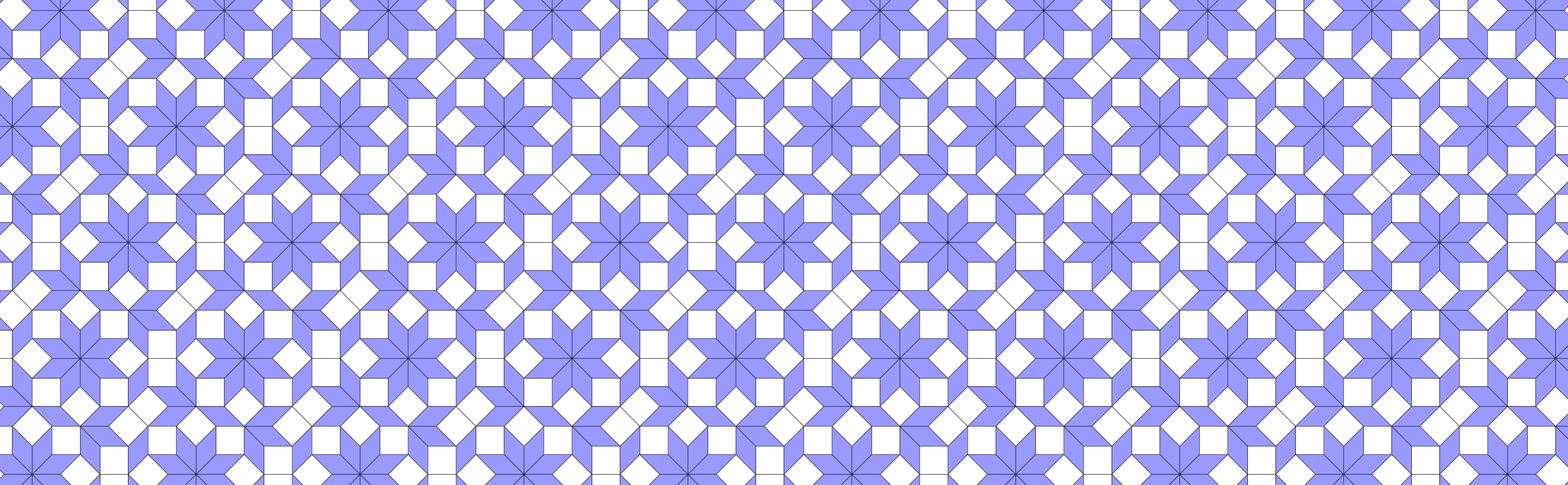}
\caption{A tiling with the same vertex-atlas as Ammann-Beenker tilings but with a different slope.}
\label{fig:beenker_tiling2}
\end{figure}

This result first appeared in \cite{Bur88} (relying on the particular value $a=\sqrt{2}$, although the above holds for any positive irrational value), see also \cite{BF13,Kat95}.
This is a particular case of the so-called {\em $4n$-fold} canonical planar tilings (the canonical planar tilings which have the same finite patterns as their image by a rotation of angle $\frac{2\pi}{4n}$ for some $n\geq 2$), see \cite{BF15a}.

\section{Penrose tilings}
\label{sec:penrose}

The Penrose tilings first appear in \cite{Pen78}.
Defined as the fixed-points of some substitution, they have also been shown in \cite{DB81} to be the canonical planar tilings whose slope $E$ is the $2$-plane of $\mathbb{R}^5$ generated by
$$
\left(\cos\tfrac{2k\pi}{5}\right)_{0\leq k<5}
\qquad\textrm{and}\qquad
\left(\sin\tfrac{2k\pi}{5}\right)_{0\leq k<5}.
$$
Alternative generators are
$$
(a,0,-a,-1,1)
\qquad\textrm{and}\qquad
(-1,1,a,0,-a)
$$
where $a=\tfrac{1+\sqrt{5}}{2}$ is the positive root of $X^2-X-1$, that is, the golden ratio.
The Grassmann coordinates of $E$ are (recall that $G_{ij}=-G_{ij}$ by convention):
$$
G_{12}=G_{23}=G_{34}=G_{45}=G_{51}=a,
$$
$$
G_{13}=G_{35}=G_{52}=G_{24}=G_{41}=1.
$$

\begin{figure}[hbtp]
\includegraphics[width=\textwidth]{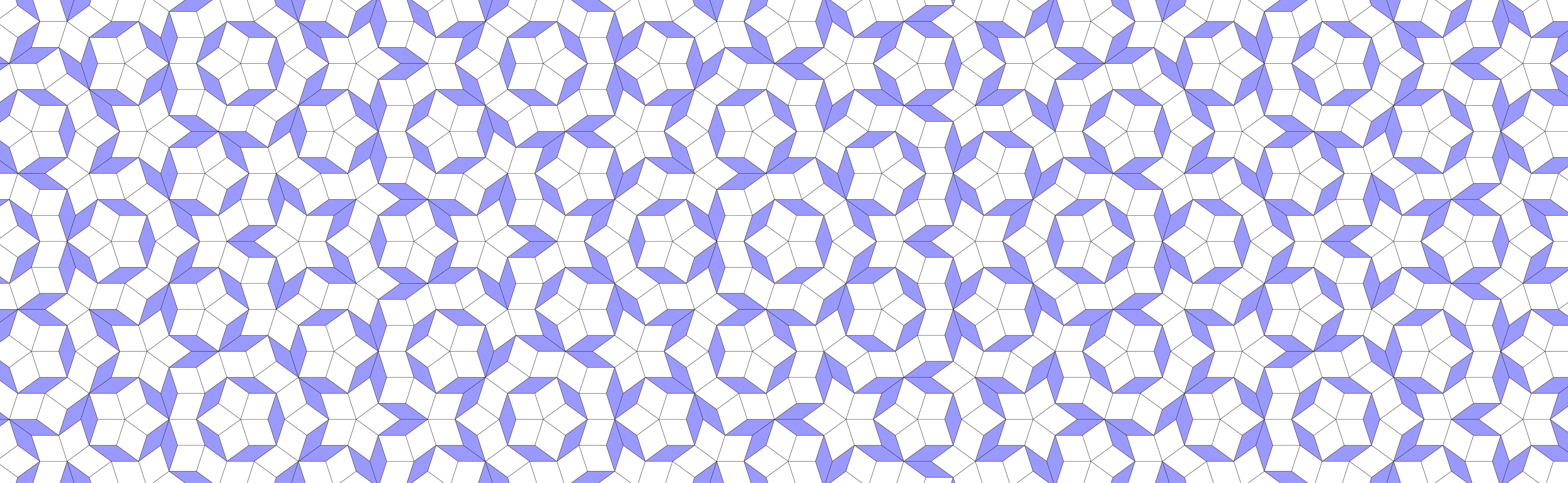}
\caption{A Penrose tiling.}
\label{fig:penrose_tiling}
\end{figure}

The point of this example is that the slope $E$ of Penrose tilings is not generic.
Indeed, the generators are orthogonal to $(1,1,1,1,1)$, so that $E$ is included in a $4$-dimensional rational space.
One checks that it is the smallest one.
It is actually possible to directly define Penrose tilings as {\em non-canonical} projection tilings in a $4$-dimensional space (see, {\em e.g.}, \cite{BG13}, Sec.~7.3).
However, we are here not interested in the Penrose tilings themselves, but we want to characterize them among all the canonical planar tilings with different slopes.
Since almost all these tilings are generic, they cannot be defined as projection tiling in a $4$-dimensional space.
This is why the usual trick to reduce to the generic case seems to be useless in our case.
Let us see more in details which problems occur.\\

We can still compute coincidences and the corresponding equations, as usually.
Among other ones, we get the equations
$$
G_{12}=G_{23}=G_{34}=G_{45}=G_{51},
$$
$$
G_{13}=G_{35}=G_{52}=G_{24}=G_{41}.
$$
With the normalization $G_{13}-1$, the Plücker relation
$$
0=G_{12}G_{34}-G_{13}G_{24}+G_{14}G_{23}
$$
becomes
$$
0=G_{12}^2-1-G_{12}.
$$
This enforces $G_{12}=a$ (the algebraic conjugate is impossible because it is negative, hence corresponds to a non-achievable slope).
The slope $E$ of Penrose tilings is thus characterized by coincidences.\\

Lemmas~\ref{lem:small_break} and \ref{lem:big_break} then ensure that there are forbidden patterns such that, for any slope $F\neq E$, the coincidences "break", that is, at least one of the forbidden patterns has a non-empty interior region in the window $W_F$.
In particular, this prevents $F$ to be generic, because otherwise $\pi'\mathbb{Z}^5$ would be dense in $W_F$, hence the non-empty interior region would contain a projected integer point, that is, a forbidden pattern would appear in the canonical planar tilings of slope $F$.
However, if $F$ is not generic, then one could imagine that although the region of the forbidden pattern has non-empty interior, no point of $\pi'\mathbb{Z}^5$ falls into it (because this set is not dense).   
For example, consider the slope $F$ generated by
$$
(\tfrac{8}{5},0,-\tfrac{8}{5},-1,1)
\qquad\textrm{and}\qquad
(-1,1,\tfrac{5}{3},0,-\tfrac{5}{3}).
$$
It is a small modification of $E$: the golden ratio is replaced in each generator by a different continued fraction approximation.
Fig.~\ref{fig:penrose_coincidence} compares the region of one of the Penrose coincidence both in the window of $E$ and $F$.
Since $F$ is non-generic, integer points are not dense in $W_F$, and it is possible that none falls into the region of the broken coincidence, even it this latter has non-empty interior (but maybe it is not possible to avoid all the coincidences and their translations?).\\

\begin{figure}[hbtp]
\includegraphics[width=0.48\textwidth]{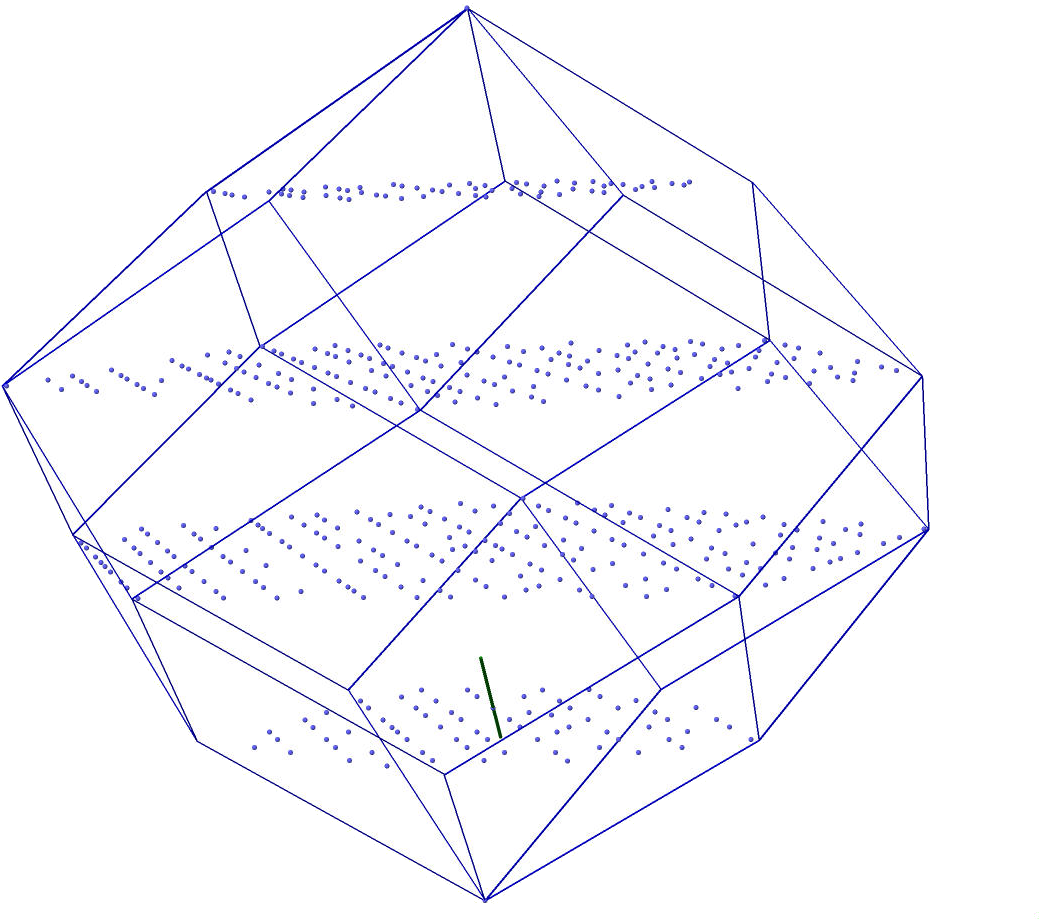}
\hfill
\includegraphics[width=0.48\textwidth]{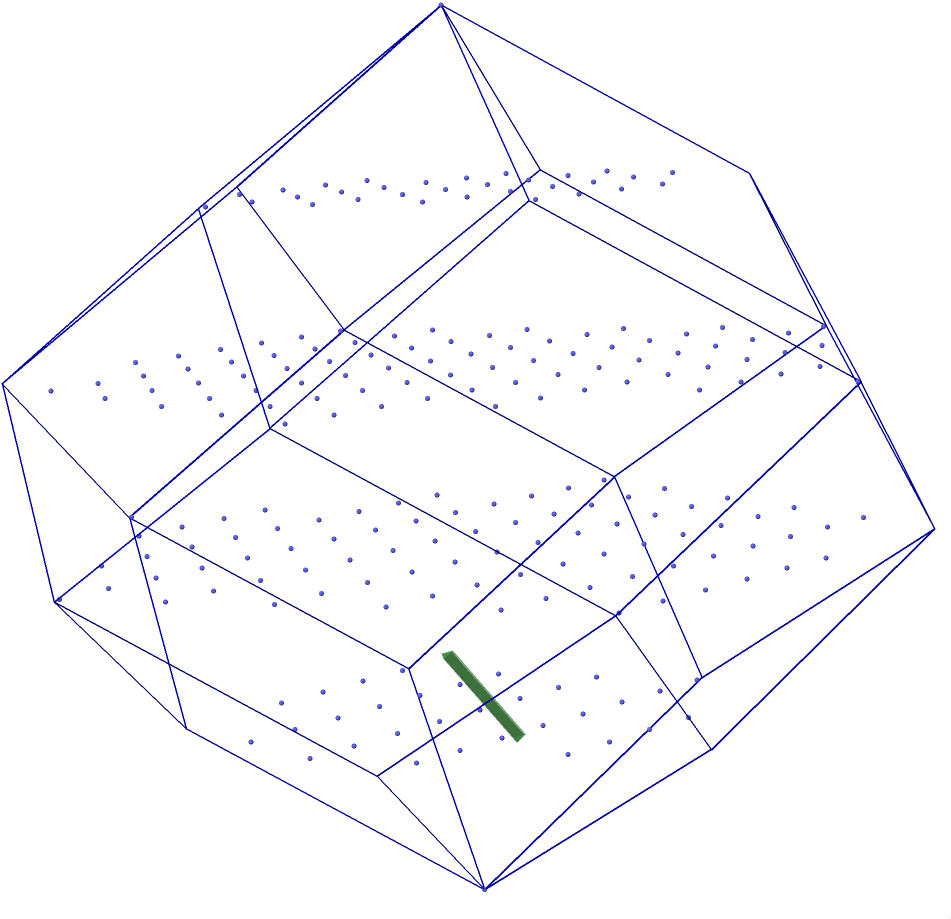}
\caption{
Left: the window $W_E$ is a rhombic icosahedron and the points of $\pi'\mathbb{Z}^5$ densely fill four parallel planes in $W_E$ (some are depicted); the region of the coincidence is a segment which goes through one of these planes but does not contain a point of $\pi'\mathbb{Z}^5$.
Right: when $E$ is changed to $F$, the coincidence breaks in an non-empty interior region (namely a triangular prism).
But $F$ is even more non-generic than $E$ ($\pi'\mathbb{Z}^5$ is discrete) and no point of $\pi'\mathbb{Z}^5$ fall into the region: we could get a planar tiling of slope $F\neq E$ without forbidden pattern (if no other regions of coincidences contain a point of $\pi'\mathbb{Z}^5$).
}
\label{fig:penrose_coincidence}
\end{figure}

Nevertheless, the Penrose tilings have been proven (by other arguments) to be characterized by patterns.
Namely, any Penrose tiling contains $7$ different $0$-patterns (up to rotation, Fig.~\ref{fig:penrose_patterns}), and any tiling with the same tiles\footnote{Not even assumed to be planar.} without other $0$-pattern is necessarily a Penrose tiling (see \cite{Sen95}, Th.~6.1).\\

\begin{figure}[hbtp]
\includegraphics[width=\textwidth]{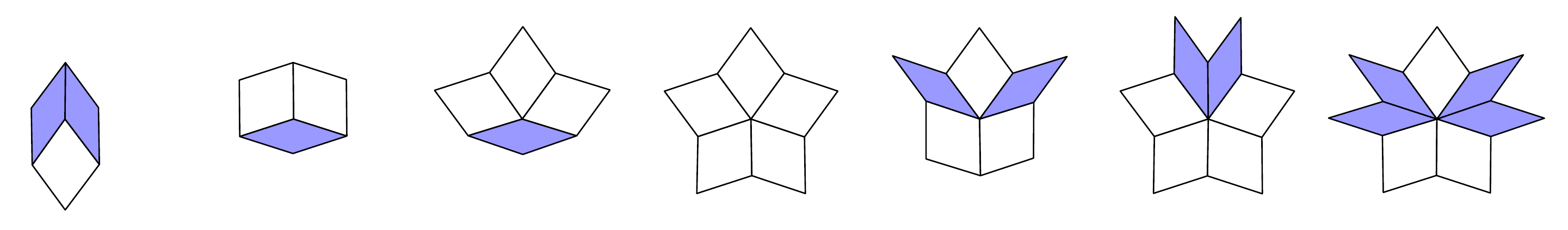}
\caption{The $0$-patterns (up to isometry) of the Penrose tilings.}
\label{fig:penrose_patterns}
\end{figure}

It is not very hard to find a set of forbidden patterns such that whenever none of them appear in a canonical tiling, the $0$-patterns of this tiling are $1$-patterns of Penrose tilings.
Such a set of forbidden patterns is not unique.
Fig.~\ref{fig:penrose_forbidden_patterns} provides a rather light one.\\

\begin{figure}[hbtp]
\includegraphics[width=\textwidth]{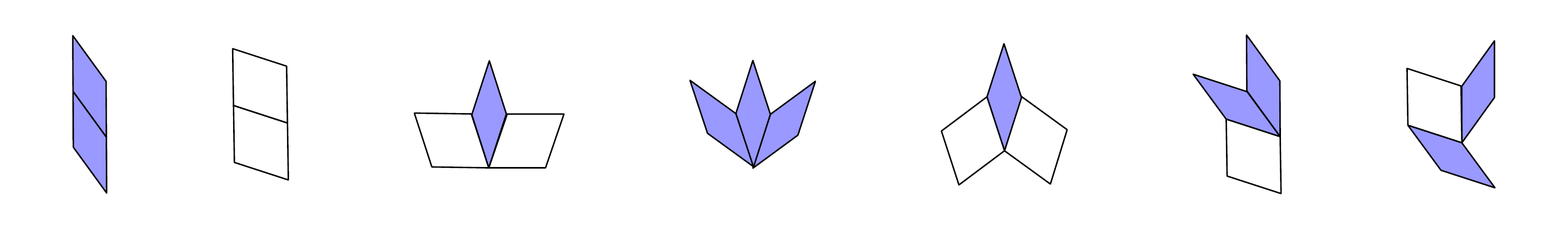}
\caption{Forbidden patterns (up to isometry) characterizing Penrose tilings.}
\label{fig:penrose_forbidden_patterns}
\end{figure}

Since these forbidden patterns characterize the slope $E$, for any slope $F\neq E$, generic or not, at least one point of $\pi'\mathbb{Z}^5$ falls into the region of a forbidden pattern.
These regions may thus be good candidates to extend the notion of coincidence to the non-generic case.
Computations show that the first forbidden pattern (from left to right on Fig.~\ref{fig:penrose_forbidden_patterns}) has an empty region (this pattern thus does not appear after a sufficiently small modification of $E$), the second one has a region with an empty interior which intersects a plane in the closure of $\pi'\mathbb{Z}^5$, and the other ones have non-empty interior but intersect the planes in the closure of $\pi'\mathbb{Z}^5$ only on their boundaries.
Fig.~\ref{fig:penrose_forbidden_regions} illustrates this.\\

\begin{figure}[hbtp]
\includegraphics[width=0.48\textwidth]{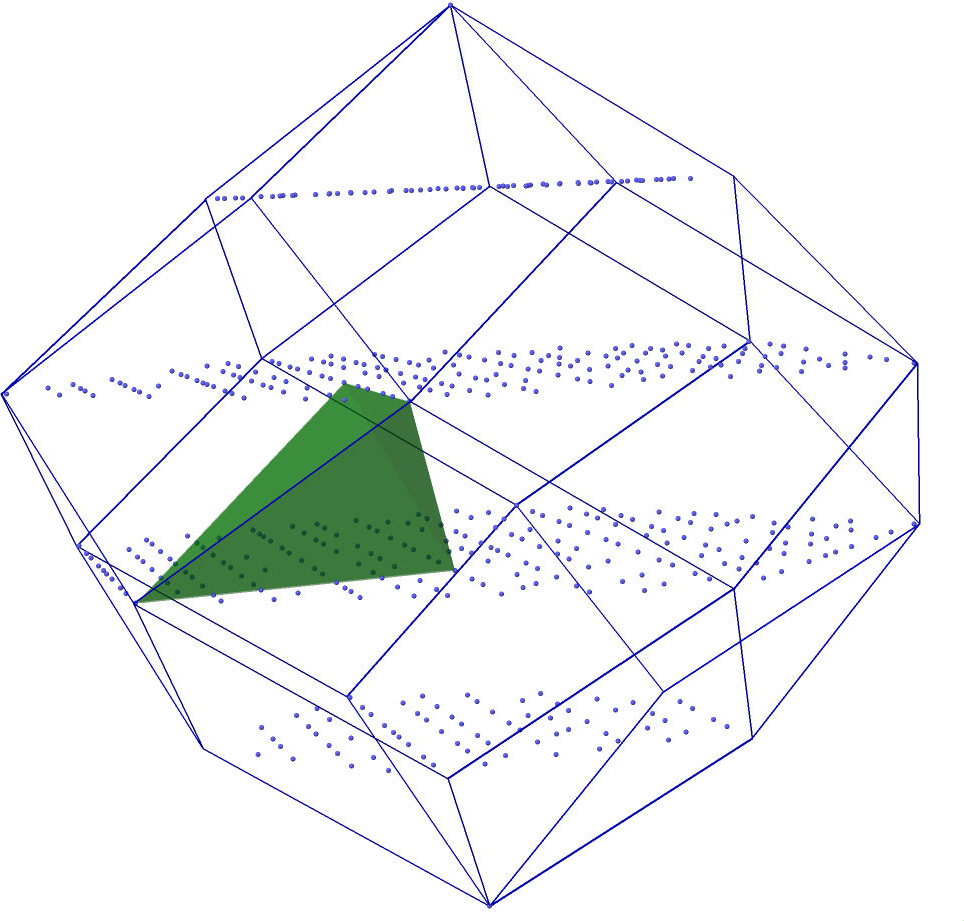}
\hfill
\includegraphics[width=0.48\textwidth]{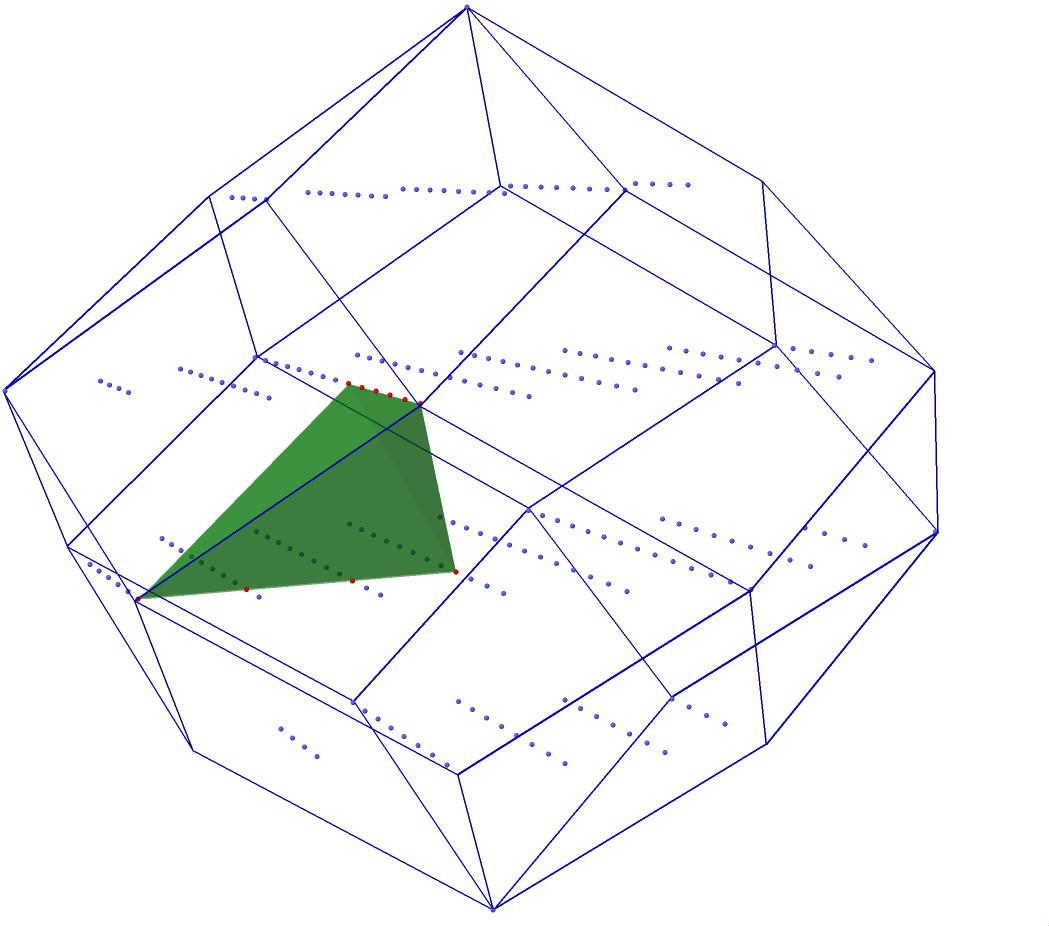}
\caption{
Left: the window $W_E$, the closure of the projection of $\mathbb{Z}^5$ and the region of one of the forbidden patterns which characterize $E$ (namely the fifth one on Fig.~\ref{fig:penrose_forbidden_patterns}).
The region has non-empty interior but its intersection with the planes densely filled by $\pi'\mathbb{Z}^5$ have empty interior (this is the union of two segments): the forbidden pattern does not appear.
Right: the same picture for the rational plane $F$ already used for Fig.~\ref{fig:penrose_forbidden_regions}.
The region has non-empty interior and contains (on its boundary) infinitely many points (because $\pi'\mathbb{Z}^5$ is a lattice in $W_F$): the forbidden pattern appears.
}
\label{fig:penrose_forbidden_regions}
\end{figure}

Actually, Penrose tilings correspond to specific affine slopes parallel to $E$ (namely those which contains a point of $\mathbb{Z}^5$).
The tilings defined by all the slopes parallel to $E$ are known as {\em generalized Penrose tilings} \cite{KP87}.
They may have different patterns, but the set of all generalized Penrose tilings is nevertheless defined by forbidden patterns.
An exemple of set of forbidden patterns is given in \cite{BF15b}, and one checks that the associated regions in the window are either empty or polygons (hence have empty interior).
This is not surprising, since when the slope ranges through all the planes parallel to $E$, the points of $\mathbb{Z}^5$ densely project into the window (and not in parallel planes as for Penrose tilings).

\bibliographystyle{alpha}
\bibliography{coincidences}

\end{document}